\newcommand{\compactlist}{\begin{list}{$\bullet$}{\setlength{\leftmargin}{1em}}}
\def\ba{{\boldsymbol \alpha}}
\def\zz{{\mathbb Z}}
\def\qq{{\mathbb Q}}
\def\cc{{\mathbb C}}
\def\ss{{\mathbb S}^1}
\def\cs{\mathbin{\#}}
\newcommand{\spinc}{\ifmmode{{\mathfrak s}}\else{${\mathfrak s}$\ }\fi}
\newcommand{\spinct}{\ifmmode{{\mathfrak t}}\else{${\mathfrak t}$\ }\fi}
\newcommand{\fig}[2] { \includegraphics[scale=#1]{#2} }
 \def\upj{\mathfrak{J}}
\def\downs{\underline{\mathfrak{S}}}
\def\ups{\overline{\mathfrak{S}}}
 \def\upjs{\mathfrak{j}}
\def\downss{\underline{\mathfrak{s}}}
\def\upss{\overline{\mathfrak{s}}}
\newtheorem{theorem}{Theorem}
\newtheorem{lemma}[theorem]{Lemma}
\newtheorem{corollary}[theorem]{Corollary}
\newtheorem{prop}[theorem]{Proposition}
\theoremstyle{definition}
\newtheorem{example}[theorem]{Example}
\begin{document}
\title{Signature invariants related to the unknotting number}
\author{Charles Livingston}
\thanks{This work was supported by a grant from the National Science Foundation, NSF-DMS-1505586.}
\address{Charles Livingston: Department of Mathematics, Indiana University, Bloomington, IN 47405 }
\email{livingst@indiana.edu}


\begin{abstract}  New lower bounds on the unknotting number of a knot are constructed from the classical knot signature function.  These bounds can be   twice as strong as previously known signature bounds. They can also be stronger than known bounds arising from Heegaard Floer and Khovanov homology.   Results include new bounds on the Gordian distance between knots and information about   four-dimensional knot invariants.  By considering a related non-balanced signature function, bounds on the unknotting number of slice knots are constructed; these are related to the property of double-sliceness. \end{abstract}

\maketitle


\section{Introduction}

The unknotting number of a knot $K\subset S^3$, denoted $u(K)$, is the minimum number of crossing changes that is required to convert $K$ into an unknot.  This is among the  most intractable knot invariants.  For instance, the unknotting numbers of  several 10--crossing knots is still unknown.  Scharlemann~\cite{scharlemann} proved that the connected sum of two unknotting number one knots has unknotting number two, but little beyond this is known concerning the  additivity of the unknotting number.

Many knot  invariants offer tools for estimating the unknotting number; these include the rank of the homology of branched covers~\cite{kinoshita, wendt}, the    Murasugi signature~\cite{murasugi}, $\sigma(K)$, and the Levine-Tristram signature function~\cite{levine, tristram}, $\sigma_K(\omega)$, defined for $\omega \in S^1 \subset \cc$. Heegaard Floer homology and Khovanov homology have provided smooth knot invariants  $\tau$,   $\Upsilon$, and $s$,   see~\cite{os2,oss,rasmussen}, that also offer  lower bounds on the unknotting number. (See also,~\cite{cochran-lickorish, owens, owens1, owens-strle}.)  

The precise bound on the unknotting number that has been known to arise from the signature function is easily described.    Let $a = \max (\sigma_K(\omega))$ and $b = \min(\sigma_K(\omega))$.  Then   $u(K)  \ge    \left( a-b \right)/2  $.  Here we will observe that the knot signature function  offers much stronger constraints on the  unknotting number;   in some cases the new bounds will be seen to be twice as strong as this previously known  bound.  Examples also demonstrate that the new bounds can  exceed  those arising from Heegaard Floer and Khovanov homology.

There is a refined version of the unknotting number that incorporates the signs of the crossing changes that unknot $K$.  Let $U(K)$ be the set of integer pairs $(p,n)$ for which $K$ can be unknotted using $p$ crossing changes from positive to negative and $n$ crossing changes from negative to positive.  Then $U(K)$ is called the {\it signed unknotting set of $K$}.  Observe that $u(K) = \min\{p + n\  \big| \  (p,n) \in U(K)\}$.  Finding constraints  on $U(K)$ is especially difficult.  The results we present here depend critically on the signs of crossing changes, and thus they are able to extract information about $U(K)$ that cannot be attained with previously known techniques. In turn, these can be used to strengthen the bounds on $u(K)$.

The invariants we develop here also provide lower bounds on the  {\it Gordian distance} between knots $K$ and $J$, denoted $d_g(K,J)$; this is the minimum number of crossing changes that are required to convert $K$ into $J$.  Clearly $d_g(K,J) \le u(K) + u(J)$;  lower bounds are   more difficult to find.  

The results presented here also have applications to four-dimensional knot invariants.  For instance,  we provide new lower bounds on the {\it clasp number} of knots; this invariant is defined to be the  minimum number of transverse double points in an immersed disk in $B^4$ bounded by $K$; it is also referred to as the {\it four-ball crossing number} and is related to the notion of {\it kinkiness} defined by Gompf~\cite{gompf}. 

The signature function is built from a {\it non-balanced signature function}, $s_K(\omega)$.  The two functions agree almost everywhere, but $s_K$ is not a concordance invariant.  In a final section we discuss how $s_K$ provides bounds on the unknotting number that can be nontrivial for slice knots, and we present applications of this to double slicing of knots, a concept   dating to such work as~\cite{sumners, terasaka-hosakawa}. 
  \vskip.05in

\subsection{  Outline and summary of results} In Section~\ref{sec:defn}  we will review the definition of the signature function of a knot, $\sigma_K(\omega)$.  This is an integer-valued step function on the set of unit length complex numbers  $\omega \in  S^1 \subset \cc$; discontinuities can occur only at roots of the Alexander polynomial, $\Delta_K(t)$.  The definition of $\sigma_K$ is such that at each discontinuity  its value is equal to its two-sided average at that point.  There is also   a related jump function,  
$$J_K(e^{2 \pi i t }) = \frac{1}{2} \left( \lim_{\tau \to t^+ }\sigma_K(e^{2 \pi i \tau})   -\lim_{\tau \to t^-  }\sigma_K(e^{2 \pi i \tau} )\right).$$

The signature function is defined in terms of a Witt group; in Sections~\ref{sec:defn} and \ref{sec:change} we study this group and how crossing changes affect the Witt class associated to a knot. Section~\ref{sec:change} presents the proof of our key result.   In the statement of the theorem and throughout this paper, we denote the unit circle in the complex plane by $\ss$.

\begin{prop}\label{thm:changes} Let $K_+$ be a knot, let $\delta$ be an irreducible rational polynomial, and let $ \{\alpha_1, \ldots, \alpha_k\} \subset\ss$ with $k >0$ satisfy $\delta(\alpha_i) = 0 $ for all $i$.    If a crossing in a diagram  for $K_+$ is changed from positive to negative to yield a knot $K_-$, then one of the following two possibilities occurs.
\begin{enumerate}
\item For every $\alpha_i$,   $ J_{K_-} (\alpha_i)- J_{K_+}(\alpha_i)  = 0$ and $\sigma_{K_-}(\alpha_i) - \sigma_{K_+}(\alpha_i)\in \{0 , 2\}$.\vskip.05in
\item For every $\alpha_i$,   $ J_{K_-} (\alpha_i) - J_{K_+}(\alpha_i) \in \{-1, 1\}$ and $\sigma_{K_-}(\alpha_i) - \sigma_{K_+}(\alpha_i) = 1$.  
\end{enumerate}
\end{prop}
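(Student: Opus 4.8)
The plan is to reduce the statement to a single rank-one perturbation of the Seifert form and then analyze its effect on the signature and on the one-sided limits that define $J_K$. First I would record the geometric input. Changing one positive crossing of $K_+$ to a negative one can be realized by a band move: smoothing the chosen crossing (respecting orientations) produces a two-component link $L$, and reinserting the crossing amounts to attaching a band $b$ to a Seifert surface $F_0$ for $L$, where the sign of the crossing is recorded precisely by the number of half-twists in $b$. Using the same $F_0$ and the same band position for $K_+$ and $K_-$, the resulting Seifert surfaces differ only in the framing of $b$, so in the associated bases the Seifert matrices have the same size $n$ and satisfy $V_- - V_+ = e_{kk}$, the elementary matrix with a single $1$ in the diagonal slot indexed by the band's core (after fixing orientations so that the positive-to-negative change is the $+1$ change). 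This is the geometric heart of the argument, and making the band bookkeeping precise — in particular that the off-diagonal entries are genuinely unchanged, since local twisting of $b$ alters only its self-framing — is the step I expect to require the most care.

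Writing $B_\omega(V) = (1-\omega)V + (1-\bar\omega)V^{T}$, so that $\sigma_K(\omega) = \operatorname{sign} B_\omega(V)$, the rank-one relation gives $B_\omega(V_-) = B_\omega(V_+) + |1-\omega|^2\, e_{kk}$, a positive semidefinite rank-one perturbation that is strictly positive for $\omega \neq 1$. Fix a root $\alpha_i$ with $\alpha_i \neq 1$ (the case $\alpha_i = 1$ is vacuous, since $\Delta_K(1)=\pm1$). On a small punctured arc about $\alpha_i$ both forms are nondegenerate, so by eigenvalue interlacing for a positive rank-one update of a nonsingular Hermitian matrix their signatures differ by $0$ or $2$; passing to the two one-sided limits, $\sigma^{\pm}_{K_-}(\alpha_i) - \sigma^{\pm}_{K_+}(\alpha_i) \in \{0,2\}$. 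Since $\sigma_K(\alpha_i)$ is the average of its one-sided limits and $J_K(\alpha_i)$ is half their difference, I obtain at once $\sigma_{K_-}(\alpha_i)-\sigma_{K_+}(\alpha_i)\in\{0,1,2\}$ and $J_{K_-}(\alpha_i)-J_{K_+}(\alpha_i)\in\{-1,0,1\}$, with the two one-sided changes equal precisely when $J$ is unchanged. This already yields the \emph{pointwise} dichotomy: either $J_{K_-}(\alpha_i)=J_{K_+}(\alpha_i)$ and $\sigma_{K_-}(\alpha_i)-\sigma_{K_+}(\alpha_i)\in\{0,2\}$, or the jump changes by $\pm1$ and the signature changes by exactly $1$. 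Equivalently, case (2) occurs at $\alpha_i$ if and only if $\sigma_{K_-}(\alpha_i)-\sigma_{K_+}(\alpha_i)$ is odd.

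The remaining, genuinely global, point is that the same case occurs at every $\alpha_i$. Here the key observation is a parity identity: since $V_+$ and $V_-$ have the same size $n$ and $\operatorname{sign}$ is congruent to rank modulo $2$, one has $\sigma_{K_\pm}(\alpha_i) \equiv n - \eta_\pm(\alpha_i) \pmod 2$, where $\eta_\pm(\alpha_i)$ is the nullity of $B_{\alpha_i}(V_\pm)$, whence $\sigma_{K_-}(\alpha_i)-\sigma_{K_+}(\alpha_i) \equiv \eta_+(\alpha_i)-\eta_-(\alpha_i) \pmod 2$. Thus case (2) holds at $\alpha_i$ exactly when $\eta_+(\alpha_i)-\eta_-(\alpha_i)$ is odd. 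To finish I would show these nullities are constant along the Galois orbit of $\delta$: for $\alpha\neq1$ one has $B_\alpha(V) = \tfrac{1-\alpha}{\alpha}(\alpha V - V^{T})$, so $\eta_\pm(\alpha)$ equals the nullity of the integral matrix $\alpha V_\pm - V_\pm^{T}$, whose rank at $\alpha$ is governed by the vanishing of its minors. Each minor is (the value at $\alpha$ of) a polynomial over $\mathbb{Q}$, and since $\delta$ is irreducible it divides such a minor at one root $\alpha_i$ if and only if it does so at every root; hence $\eta_\pm(\alpha_i)$, and a fortiori the parity of $\eta_+(\alpha_i)-\eta_-(\alpha_i)$, is independent of $i$. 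This forces the dichotomy to be uniform over $\{\alpha_1,\dots,\alpha_k\}$ and completes the argument.
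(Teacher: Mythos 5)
Your route is genuinely different from the paper's: instead of diagonalizing the Witt class over $\qq(x)$ and reducing to a $2\times 2$ (then $1\times 1$) block as in Lemma~\ref{cor:block}, you work directly with the rank-one positive perturbation $B_\omega(V_-)=B_\omega(V_+)+|1-\omega|^2 e_{kk}$ and eigenvalue interlacing. The geometric input ($V_--V_+=e_{kk}$ for suitable Seifert surfaces of the same size) is exactly Theorem~\ref{thm:seifert}, and your interlacing argument correctly yields the pointwise dichotomy: at each $\alpha_i$ either both one-sided limits of the signature change by the same amount (so $J$ is unchanged and $\sigma$ changes by $0$ or $2$), or they change by different amounts (so $J$ changes by $\pm 1$ and $\sigma$ by exactly $1$). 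Up to that point the argument is sound and arguably more elementary than the paper's.

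The gap is in the global step. The parity identity $\sigma_{K_\pm}(\alpha_i)\equiv n-\eta_\pm(\alpha_i)\pmod 2$ is not valid, because $\sigma_K(\alpha_i)$ is by definition the two-sided \emph{average} of the one-sided limits, not $\operatorname{sign}B_{\alpha_i}(V)$. An eigenvalue branch of $B_\omega(V)$ that vanishes at $\alpha_i$ to \emph{even} order contributes $1$ to the nullity $\eta$ but contributes $\pm 1$, not $0$, to the two-sided average; so in fact $\sigma_K(\alpha_i)\equiv n-\eta(\alpha_i)+e\pmod 2$, where $e$ is the number of such branches. One has $e\equiv \eta(\alpha_i)-\mu(\alpha_i)\pmod 2$, where $\mu(\alpha_i)$ is the order of vanishing of $\det B_\omega(V)=(1-\omega)^{2g}\Delta_K(\omega^{-1})$ at $\alpha_i$, and nothing forces $\eta$ and $\mu$ to have the same parity: a knot whose Alexander module is cyclic of order $\delta^2$ has $\mu=2$ but $\eta=1$ at each root of $\delta$. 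So the invariant you Galois-transport is the wrong one; the quantity that actually controls the case is the multiplicity of $\delta$ in the Alexander polynomials, not the nullity of the evaluated matrix. The repair stays entirely within your framework: for nearby nondegenerate $\omega$ the signature difference is $2$ precisely when $\det B_\omega(V_-)\cdot\det B_\omega(V_+)<0$, so case (2) occurs at $\alpha_i$ if and only if this real-valued product changes sign at $\alpha_i$, i.e.\ if and only if its order of vanishing there is odd; since $(1-\omega)^{4g}$ does not vanish at $\alpha_i\neq 1$, that order equals the multiplicity of the irreducible factor $\delta$ in $\Delta_{K_+}\Delta_{K_-}$, which is manifestly the same for every root $\alpha_i$ of $\delta$. (Alternatively, Theorem~\ref{thm:parity} already gives that all the $\sigma_K(\alpha_i)$ for a fixed knot share a common parity, and by your pointwise dichotomy that parity difference determines which case occurs.)
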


In Section~\ref{sec:alg} we prove a corollary to this proposition.

\begin{theorem}\label{thm:details} Let $K \subset  S^3$ be a knot, let $\delta(x)$ be a rational irreducible polynomial, and let  $ \{\alpha_1, \ldots, \alpha_k\} \subset \ss$ with $k >0$ satisfy $\delta(\alpha_i) = 0$ for all $i$.  Let $\upj_\delta$ denote the maximum   of $\{\big|J_K(\alpha_i)\big|\}$ and let $\downs_\delta$ and $\ups_\delta$ denote the minimum and maximum of $\{ \sigma_K(\alpha_i)\}$, respectively.    Suppose that  $\ups_\delta \ge 0$.  
\begin{enumerate} 
\item If $\downs_\delta \le \upj_\delta$, then $u(K) \ge \upj_\delta + ( \ups_\delta -\downs_\delta)/2$.\vskip.05in
\item  If $\downs_\delta \ge \upj_\delta$, then $u(K) \ge (\upj_\delta +  \ups_\delta)/2$. 
\end{enumerate}
\end{theorem}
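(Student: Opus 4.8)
The plan is to run an arbitrary unknotting sequence for $K$ and to keep a ledger, at the fixed evaluation points $\alpha_1,\dots,\alpha_k$, of how $\sigma_K$ and $J_K$ evolve. Fix crossing changes $K = K^{(0)} \to K^{(1)} \to \cdots \to K^{(N)} = U$ with $N = u(K)$ and $U$ the unknot, so $\sigma_{K^{(N)}}(\alpha_i) = J_{K^{(N)}}(\alpha_i) = 0$ for all $i$. Because each $\alpha_i$ is a root of the fixed polynomial $\delta$, the evaluation points never move, and Proposition~\ref{thm:changes} applies verbatim at each step. A positive-to-negative change is globally of Type (1) or Type (2); a negative-to-positive change is the reverse of such a move, so its effect on every $\sigma_K(\alpha_i)$ and $J_K(\alpha_i)$ is the negation of a Type (1) or Type (2) effect. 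I would therefore sort the $N$ changes into four bins, writing $p_1,p_2$ for the positive-to-negative changes of Type (1),(2) and $n_1,n_2$ for the negative-to-positive ones, so that $u(K) = p_1+p_2+n_1+n_2$. The fact to record is that at \emph{every} root a Type (1) change moves $\sigma$ by $0$ or $\pm 2$ and fixes $J$, whereas a Type (2) change moves $J$ by exactly $\pm 1$ and $\sigma$ by exactly $+1$ (for a positive-to-negative change) or $-1$ (for a negative-to-positive one).

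Next I would extract three linear inequalities. Choosing a root $\mu$ with $|J_K(\mu)| = \upj_\delta$, only the $p_2+n_2$ Type (2) changes alter $J(\mu)$, each by exactly $\pm 1$, with net change $-J_K(\mu)$; this forces $(\mathrm{i})\ p_2+n_2 \ge \upj_\delta$. At a root $\beta$ with $\sigma_K(\beta)=\ups_\delta$, the total change of the signature over the sequence equals $-\ups_\delta$, and I would write this as the sum of a Type (1) positive term in $[0,2p_1]$, the forced term $+p_2$, a Type (1) negative term in $[-2n_1,0]$, and the forced term $-n_2$. Solving for the Type (1) negative term and discarding the nonnegative positive term gives $(\mathrm{ii})\ 2n_1+n_2 \ge \ups_\delta + p_2$. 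The identical bookkeeping at a root $\gamma$ with $\sigma_K(\gamma)=\downs_\delta$, now solving for the Type (1) positive term and discarding the nonpositive negative term, yields $(\mathrm{iii})\ 2p_1+p_2 \ge n_2 - \downs_\delta$. All three hold for every unknotting sequence.

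Finally I would assemble the bounds. Adding (ii) and (iii), the $p_2$ and $n_2$ terms cancel, leaving $p_1+n_1 \ge (\ups_\delta-\downs_\delta)/2$; combining with (i) gives $u(K) = (p_1+n_1)+(p_2+n_2) \ge \upj_\delta + (\ups_\delta-\downs_\delta)/2$, which is conclusion (1). For conclusion (2) I would instead feed (ii) into $u(K) \ge p_2+n_1+n_2$ to obtain $u(K) \ge \ups_\delta/2 + (3p_2+n_2)/2$, and then apply (i) through $3p_2+n_2 \ge p_2+n_2 \ge \upj_\delta$ to get $u(K) \ge (\upj_\delta+\ups_\delta)/2$. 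Both estimates in fact hold unconditionally; the hypotheses $\downs_\delta \le \upj_\delta$ and $\downs_\delta \ge \upj_\delta$ only record which is the stronger one, their difference being $(\upj_\delta-\downs_\delta)/2$.

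The delicate point, and the step I expect to demand the most care, is the derivation of (ii) and (iii): the Type (2) changes must do double duty, counting once toward the jump bound (i) while simultaneously contributing their forced $\pm 1$ to the signature balance at $\beta$ and at $\gamma$. The crucial accounting fact is that a Type (2) change contributes \emph{exactly} $+1$ or $-1$ to $\sigma$ at every root, so these contributions enter the net-change equations as the honest terms $+p_2$ and $-n_2$ rather than as slack; this is precisely what makes the $p_2,n_2$ terms cancel when (ii) and (iii) are summed and prevents the estimate from degrading to the weaker $\upj_\delta/2 + (\ups_\delta-\downs_\delta)/2$. A secondary point to check with care is the sign convention for negative-to-positive changes, together with the claim that the global dichotomy of Proposition~\ref{thm:changes} genuinely permits assigning a single type to each crossing change simultaneously at all of the $\alpha_i$, so that the four counts $p_1,p_2,n_1,n_2$ are well defined.
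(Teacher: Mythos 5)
Your argument is correct, and it reaches both inequalities by a genuinely different route from the paper's. The paper abstracts the problem into a combinatorial game on triples $(\upjs,\downss,\upss)$: each crossing change is shown to act by one of a fixed list of $F$--type or $G$--type functions (a step that requires not only Proposition~\ref{thm:changes} but also the parity statement of Theorem~\ref{thm:parity}, which is what guarantees that a Type~(2) change genuinely moves the maximum of $|J_K(\alpha_i)|$ by one), and the bound is then obtained by putting a minimal sequence of such functions into a normal form --- $G$--type moves first, raise/lower pairs cancelled --- and counting what remains. You instead fix roots $\mu,\beta,\gamma$ realizing $\upj_\delta,\ups_\delta,\downs_\delta$ once and for all, record the net change of $J(\mu)$, $\sigma(\beta)$, $\sigma(\gamma)$ over the entire unknotting sequence, and extract the linear inequalities (i)--(iii) in the counts $p_1,p_2,n_1,n_2$; the theorem falls out of linear combinations of these. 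This buys two things: it avoids the commutation/normal-form analysis entirely, and, because you only ever track $J$ at the single root $\mu$, you do not need the parity argument controlling how $\max_i|J_K(\alpha_i)|$ behaves under a Type~(2) change. The point you flag as delicate --- that a Type~(2) change contributes exactly $\pm 1$ to the signature at every root, so that $p_2$ and $n_2$ enter the signature balance as honest terms rather than slack --- is exactly what Proposition~\ref{thm:changes}(2) supplies, and it is the same engine driving the paper's proof (there it appears as the fact that $G$--type functions shift both $\downss$ and $\upss$ by $1$); likewise the simultaneous assignment of a single type at all $\alpha_i$ is precisely the ``for every $\alpha_i$'' clause of that proposition. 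Your remark that both bounds hold unconditionally, the hypotheses merely selecting the stronger one, is consistent with the paper. As a bonus, your inequalities already contain the signed refinement: $2(n_1+n_2)\ge(\ups_\delta+p_2)+n_2\ge\ups_\delta+\upj_\delta$ and $2(p_1+p_2)\ge(n_2-\downs_\delta)+p_2\ge\upj_\delta-\downs_\delta$ recover Theorem~\ref{thm:signs} directly.
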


\noindent{\bf Note.}  Letting $-K$ denote the mirror image of $K$, we have that $\sigma_{-K}(\omega) = -\sigma_K(\omega)$.    We also have that $u(-K) = u(K)$.  Thus, the  condition $\ups \ge 0$ does not limit the generality of Theorem~\ref{thm:details}.  The set of polynomials that are relevant in applying this theorem are symmetric factors of the Alexander polynomial of $K$, $\Delta_K(x)$.  The strongest obstructions arise by letting  $ \{\alpha_1, \ldots, \alpha_k\} $ be the full set of unit length roots of $\delta$.

Section~\ref{sec:alg} also presents an analog of Theorem~\ref{thm:details} in the case of signed unknotting numbers.  

\begin{theorem}\label{thm:signs} Let $K$,  $ \upj_\delta,  \downs_\delta$, and $ \ups_\delta$ be as in the statement of Theorem~\ref{thm:details}.  Suppose that $\ups_\delta \ge 0$. 
\begin{enumerate}

\item  If $\downs_\delta \le \upj_\delta$, then unknotting $K$ requires at least $(\upj_\delta + \ups_\delta)/2$ negative to positive crossings and $(\upj_\delta - \downs_\delta)/2$ positive to negative crossing changes.  \vskip.05in
\item If $\downs_\delta \ge \upj_\delta$, then unknotting $K$ requires at least $(\upj_\delta + \ups_\delta)/2$ negative to positive crossing changes. 
\end{enumerate}
\end{theorem}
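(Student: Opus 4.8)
The plan is to carry out the bookkeeping that must underlie Theorem~\ref{thm:details}, but to keep the counts of the two signs of crossing change separate. Suppose $K$ is carried to the unknot by a sequence of crossing changes, $p$ of them positive-to-negative and $n$ negative-to-positive, giving a chain $K = K^{(0)}, K^{(1)}, \ldots, K^{(p+n)} = U$. Proposition~\ref{thm:changes} applies to each consecutive pair, read in reverse and with signs flipped for the negative-to-positive changes, and sorts each change into Type~(1) or Type~(2) \emph{simultaneously at all the roots} $\alpha_i$. I would set $P_1, P_2$ equal to the number of positive-to-negative changes of each type and $N_1, N_2$ the number of negative-to-positive changes, so that $p = P_1 + P_2$ and $n = N_1 + N_2$.

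Next I would telescope the signature and jump increments from $K$ to $U$, using $\sigma_U \equiv 0$ and $J_U \equiv 0$. At a fixed root $\alpha_i$ a Type~(2) change alters $\sigma$ by $+1$ (positive-to-negative) or $-1$ (negative-to-positive), while a Type~(1) change alters it by $0$ or $\pm 2$; summing gives $\sigma_K(\alpha_i) = N_2 - P_2 + 2(b_i - a_i)$, where $0 \le a_i \le P_1$ counts the Type~(1) positive-to-negative changes raising $\sigma$ by $2$ at $\alpha_i$ and $0 \le b_i \le N_1$ counts the Type~(1) negative-to-positive changes lowering it by $2$. Extremizing over $i$ yields $\ups_\delta \le 2N_1 + N_2 - P_2$ and $\downs_\delta \ge N_2 - 2P_1 - P_2$. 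For the jump function, only Type~(2) changes move $J$, each by $\pm 1$, so $-J_K(\alpha_i)$ is a signed sum of the $P_2 + N_2$ Type~(2) contributions, whence $\upj_\delta \le P_2 + N_2$. The crucial feature is that the single quantity $P_2 + N_2$, the total number of Type~(2) changes, both bounds the jump and reappears inside the two signature estimates.

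Assembling these three inequalities is then elementary. For the negative-to-positive count,
\[
2n = (2N_1 + N_2) + N_2 \ge (\ups_\delta + P_2) + N_2 = \ups_\delta + (P_2 + N_2) \ge \ups_\delta + \upj_\delta,
\]
giving $n \ge (\upj_\delta + \ups_\delta)/2$, the bound asserted in both parts. For the positive-to-negative count,
\[
2p = (2P_1 + P_2) + P_2 \ge (N_2 - \downs_\delta) + P_2 = (P_2 + N_2) - \downs_\delta \ge \upj_\delta - \downs_\delta,
\]
giving $p \ge (\upj_\delta - \downs_\delta)/2$. This is the content of part~(1); under the hypothesis $\downs_\delta \ge \upj_\delta$ of part~(2) the right-hand side is nonpositive, so this bound is vacuous and accordingly no positive-to-negative constraint is claimed there.

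I expect the main difficulty to be organizational rather than analytic. One must confirm that Proposition~\ref{thm:changes} may legitimately be read backwards with reversed signs for the negative-to-positive changes, and must track carefully that the Type of each change is a single global choice across all roots, whereas the per-root data $a_i$, $b_i$ and the individual jump signs are allowed to vary with $i$. Once the variables $P_1, P_2, N_1, N_2$ are in place and one notices that $P_2 + N_2$ is what links the jump bound to the signature bounds, the remaining work reduces to the two displayed chains of inequalities. The normalization $\ups_\delta \ge 0$ enters only to guarantee that the stated lower bounds are nonnegative.
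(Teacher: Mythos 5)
Your argument is correct, and it reaches the conclusion by a genuinely different route from the paper. The paper derives Theorem~\ref{thm:signs} as a quick addendum to its proof of Theorem~\ref{thm:details}: crossing changes are encoded as $F$-- and $G$--type functions acting on triples $(\upjs,\downss,\upss)$, a minimal sequence is put into a normal form (all $G$--moves first, raise--lower pairs cancelled), and the signed counts are then read off from that canonical sequence --- if $a$ of the $\upj_\delta$ leading $G$--moves lower the signatures, the totals $a+(\ups_\delta+\upj_\delta-2a)/2=(\upj_\delta+\ups_\delta)/2$ and $(\upj_\delta-a)-(\downs_\delta+\upj_\delta-2a)/2=(\upj_\delta-\downs_\delta)/2$ are independent of $a$. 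You instead telescope the per-root increments of $\sigma$ and $J$ over an \emph{arbitrary} unknotting sequence and extract three linear inequalities, $\ups_\delta\le 2N_1+N_2-P_2$, $\downs_\delta\ge N_2-2P_1-P_2$, and $\upj_\delta\le P_2+N_2$, whose combination gives both signed bounds directly; the key observation that the single quantity $P_2+N_2$ couples the jump bound to the signature bounds is exactly right, as is your care that the Type of a change is global across the roots of $\delta$ while the individual $0$-versus-$2$ and $\pm1$ choices may vary with $\alpha_i$. Your route buys independence from the normalization step (no need to argue that $F$'s and $G$'s commute or that raise--lower $G$--pairs compose to $F$'s), applies to non-minimal sequences without comment, and in fact establishes the $p$-bound unconditionally (it is simply vacuous when $\downs_\delta\ge\upj_\delta$, matching part (2)); the paper's route buys reusability, since the same normal-form machinery powers both Theorem~\ref{thm:details} and Theorem~\ref{thm:signs}. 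The one point worth making explicit in a write-up is the sign-reversed reading of Proposition~\ref{thm:changes} for negative-to-positive changes, which you correctly flag but should state as a one-line lemma.
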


In Section~\ref{sec:polynomial} we observe that the signed unknotting data obtained from different choices of polynomials can be complementary.  Using this, we provide examples for which combining the bounds that arise from different polynomials yields bounds on the (unsigned) unknotting number that are stronger than what can be obtained from either one of the polynomials. 
 
 In Section~\ref{sec:compare} we   construct explicit examples to demonstrate  that the bounds on the unknotting number provided by Theorem~\ref{thm:details} can be twice as strong as previously known signature bounds.     We also prove that our new bounds cannot exceed twice the classical bound.

Section~\ref{sec:gordian} discusses the application of these results to bounding the Gordian distance between knots.

In Section~\ref{sec:4d} we describe a four-dimensional perspective on these results. 
The obstructions we develop    actually  bound  the number of crossing changes required to convert $K$ into a knot with trivial signature function.  Thus, they  also bounds the number of crossing changes required to convert $K$ into a slice knot (the {\it slicing number} of $K$) and the number of crossing changes required to convert $K$ into an algebraically slice knot (the {\it algebraic slicing number}).  Past work on these invariants includes~\cite{livingston0, owens1, owens-strle}.  

In the remainder of Section~\ref{sec:4d} the focus is on the clasp number~\cite{murakami-yasuhara} of the knot $K$, which is the minimum number of   transverse double points in an immersed disk bounded by $K$ in the four-ball.  In the course of the work we also present a new simplified  proof of a result in~\cite{livingston1} that offers strong bounds on the  the cobordism distance between knots $K$ and $J$; this is the minimum genus of a cobordism $(W,F)$ between $(S^3, K)$ and $(S^3,J)$ with $W \cong S^3 \times I$.  References include~\cite{baader, baader1, feller, feller-krcatovich, hirasawa, kawamura,  owens1, owens-strle}.

In Section~\ref{sec:non-balanced} we briefly discuss the non-balanced signature function, $s_K(\omega)$, defined as the signature of the matrix denoted $W_F$ in
 Section~\ref{sec:defn}.  (The standard signature function, $\sigma_K(\omega)$, is built as the two-sided average of $\sigma_K(\omega)$.  The two functions agree almost everywhere, but $s_K(\omega)$ is not a concordance invariant.  As explained in the section, $s_K(\omega)$ provides bounds on the unknotting number of slice knots; from the four-dimensional perspective it is related to double-sliceness of knots.

\subsection{Example} To conclude this introduction, we provide a simple example illustrating  Theorem~\ref{thm:details}.

\vskip.05in 
\begin{example} \label{example1}
We prove the knot $  5_1 \cs 10_{132}$ has unknotting number 3.  To simplify our work, we let $K =  -5_1 \cs -10_{132}$ and prove $u(K) = 3$.  Working with the standard diagrams for $5_1$ and $10_{132}$, such as illustrated in~\cite{cha-livingston2, rolfsen}, one can quickly show that their unknotting numbers are at most $2$ and $1$, respectively, and thus $u(K) \le 3$.  We will prove that $u(K) = 3$ by showing $u(K) \ge 3$.    

The signature functions for $-5_1$ and $10_{132}$  and  the signature function for the difference, $K$, are illustrated in Figure~\ref{fig:graphs1}, graphed   as functions of $t$, where $\omega = e^{2 \pi i t}$, $0\le t \le 1/2$.    Let  $\delta$ be the tenth cyclotomic polynomial, $\phi_{10}$, having roots $\omega_1 = e^{2\pi i (1/10)} $ and  $\omega_2 = e^{2\pi i (3/10)}$ on the upper half circle.  As seen in the illustration, the jumps at $\omega_1$ and $\omega_2$ for $K$ are   0 and $2$, respectively.  The signatures are 0 and $2$ at these points.  

In the notation of Theorem~\ref{thm:details} we have $\upj_\delta = 2, \downs_\delta = 0$, and $\ups_\delta = 2$, and from that  theorem we have $u(K) \ge 2 + (2 - 0)/2 = 3$, as desired.
For this knot, the classical lower bound on the unknotting number that arises from the signature function is 2.  (The Rassmussen invariant $s$, the tau invariant $\tau$ and the Upsilon invariant, $\Upsilon$, all provide lower  bounds of 1.  For the first two, the values have been tabulated~\cite{cha-livingston2}.  Because $10_{132}$ is nonalternating, the computation of $\Upsilon_K$ is more complicated and will not be presented here.)

Applying Theorem~\ref{thm:signs}, we see that to unknot  $  5_1 \cs 10_{132}$ requires at least two crossing changes from positive to negative and one crossing change from negative to positive.
    \end{example}

\begin{figure}[h]
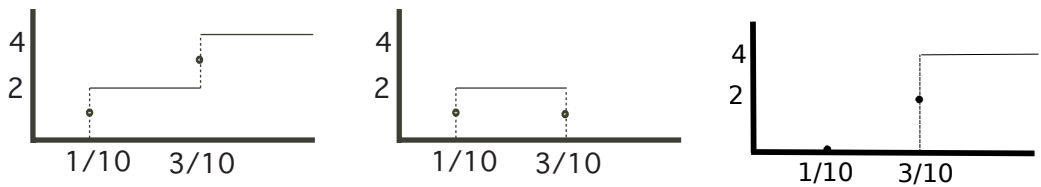

\fig{1}{sign25a} \hskip.2in \fig{ 1}{sign25b}\hskip.2in \fig{.43}{abc} \caption{Signature functions for $-5_1,  10_{132}$ and $-5_1 \cs -10_{132}$.}\label{fig:graphs1}\end{figure}

\vskip.05in
\subsection{  Acknowledgments}  Many thanks to Pat Gilmer, whose careful reading    and suggestions helped eliminate several gaps and  greatly clarified the exposition.  Thanks are also due to Jeff Meier and Matthias Nagel for helpful comments.


\section{Witt class invariants of knots and the signature function} \label{sec:defn}

\subsection{The Witt class of a knot}
Let $F \subset S^3$ denote a  genus $g$ compact oriented surface with connected boundary $K$.  We will also write $F$ to denote  the surface  along with a choice of basis, calling this a {\it based surface}.  Associated to $F$ there is a $2g \times 2g$ Seifert matrix $V_F$.  Given $V_F$,  there is the matrix $W_F \in M_{2g,2g}(\qq(x))$ defined by 
$$W_F = (1-x)V_F + (1- x^{-1})V_F^{\sf T}.$$
Here $\qq(x)$ is the quotient field of $\qq[x, x^{-1}]$.  Elements of $\qq[x, x^{-1}]$ are Laurent polynomials; we will refer to elements of  $\qq[x,x^{-1}]$ simply as polynomials. 

For future reference, we recall that the Alexander polynomial of $K$ is given by $\Delta_K(x) = \det (V_F - xV_F^{\sf T}) \in \zz[x, x^{-1}]$ and note that $\det (W_F) = (1-x)^{2g}\Delta_K(x^{-1})$.  The Alexander polynomial is  well-defined up to multiplication by $\pm x^k$ for some $k$.

The Witt group $W(\qq(x))$ is defined to be the set of equivalence classes of nonsingular hermitian matrices with coefficients in $\qq(x)$, a field with involution $x \to x^{-1}$.  Two such matrices, $A$ and $B$, of ranks $m$ and $n$, are called  Witt equivalent if $m \equiv n \mod 2$ and  the form defined by $A \oplus - B$ vanishes on a subspace of dimension $(m+n)/2$.  The group structure on $W(\qq(x))$ is induced by direct sums and inversion is given by   multiplication by $-1$. Congruent matrices represent the same element in the Witt group.  For details concerning this Witt group, see~\cite{litherland}.  We have the following fundamental result of Levine~\cite{levine}.

\begin{prop} If $F_1$ and $F_2$ are   based Seifert surfaces for a knot $K$, then $W_{F_1} $ is Witt equivalent to $W_{F_2}$.
\end{prop}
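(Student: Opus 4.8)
The plan is to reduce the assertion to the classical theorem of Trotter and Murasugi that any two based Seifert surfaces for a fixed knot $K$ have $S$-equivalent Seifert matrices, and then to verify that each generating move of $S$-equivalence fixes the class of $W_F$ in $W(\qq(x))$. Concretely, any two Seifert surfaces are related by ambient isotopy together with stabilizations (tube attachments) and their inverses, and on Seifert matrices these are realized by congruences $V_F\mapsto P V_F P^{\sf T}$ with $P\in GL(2g,\zz)$ and by the elementary enlargement
$$
V_F\longmapsto V'=\begin{pmatrix} V_F & \xi & 0\\ \eta^{\sf T} & 0 & 1\\ 0 & 0 & 0\end{pmatrix},
$$
together with the inverse reduction, where $\xi,\eta$ are column vectors. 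Since $W_F$ is a linear function of $V_F$ and $V_F^{\sf T}$, a congruence $V_F\mapsto P V_F P^{\sf T}$ produces $W_F\mapsto P W_F P^{\sf T}$, and congruent hermitian matrices represent the same Witt class by definition; so the congruence move is harmless and the entire content lies in the enlargement move.

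For the enlargement I would first compute $W'=(1-x)V'+(1-x^{-1})(V')^{\sf T}$ directly. Because the adjoined row and column of $V'$ are nearly zero, $W'$ acquires the block form
$$
W'=\begin{pmatrix} W_F & c & 0\\ c^{*} & 0 & 1-x\\ 0 & 1-x^{-1} & 0\end{pmatrix},
\qquad c=(1-x)\xi+(1-x^{-1})\eta,
$$
where $c^{*}$ is the conjugate transpose of $c$. Expanding the determinant along the last row gives $\det W'=\pm(1-x)(1-x^{-1})\det W_F\neq 0$, so $W'$ is nonsingular over $\qq(x)$. The final basis vector $e$ spans an isotropic line $L=\langle e\rangle$, since the corresponding diagonal entry of $W'$ vanishes, and $e$ is $W'$-orthogonal to everything except the penultimate basis vector. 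A one-line computation then identifies the orthogonal complement $L^{\perp}$ as the span of the first $2g$ basis vectors together with $e$, on which the form restricts to $W_F\oplus 0$. Passing to $L^{\perp}/L$ therefore returns exactly the form $W_F$, and the standard fact that a nonsingular hermitian form is Witt equivalent to the form it induces on $L^{\perp}/L$ for any isotropic $L$ yields $W'\sim W_F$. Hence enlargement is Witt trivial, and combining this with the congruence case and the $S$-equivalence theorem proves the proposition.

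The main obstacle is bookkeeping rather than ideas: one must pin down the precise normal form of the stabilization move at the level of Seifert matrices, propagate it correctly through the definition of $W_F$ --- in particular keeping track of the off-diagonal coupling vector $c$ and the involution $x\mapsto x^{-1}$ so that $W'$ is genuinely hermitian --- and then confirm that the adjoined rank-two block is metabolic, not merely that its diagonal entries vanish. Once the block form of $W'$ and the nonsingularity are in hand, the isotropic-reduction lemma closes the argument immediately.
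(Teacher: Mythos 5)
Your proof is correct and is essentially the classical argument that the paper does not reproduce but attributes to Levine: reduce to the Trotter--Murasugi $S$-equivalence theorem, note that basis change gives a hermitian congruence $W_F\mapsto PW_FP^{\sf T}$ (harmless by the paper's definition of the Witt group), and check that an elementary enlargement adds a metabolic rank-two block, which you do correctly via the isotropic line spanned by the last basis vector and the reduction $W'\sim W'|_{L^{\perp}}/L = W_F$. The only bookkeeping caveat, which you already flag, is that the standard enlargement moves come in two transposed flavors (the off-diagonal $1$ may sit above or below the diagonal, with only one of $\xi,\eta$ nonzero), but the identical computation handles both.
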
 

This permits us to define $W_K \in W(\qq(x))$ to be the Witt class represented by $W_F$ for an arbitrary choice of based Seifert surface $F$ for $K$.

\subsection{The signature function of a Witt class}
Suppose that  ${\sf w}  \in W(\qq(x))$ can be represented by two matrices, $A(x)$ and $B(x)$.  For almost all $\alpha \in \ss$, the matrices $A(\alpha)$ and $B(\alpha)$ are defined and nonsingular.  For all such $\alpha$, the signatures of $A(\alpha)$ and $B(\alpha)$, denoted $\sigma_A$ and $\sigma_B$, will be equal.  Thus for all real $t$,  there is an equality of limits: 
$$  \frac{1}{2}  \left( \lim_{\tau \to t^+ }\sigma_A(e^{2 \pi i \tau})    +\lim_{\tau \to t^-  }\sigma_A(e^{2 \pi i \tau} )\right)  =  \frac{1}{2} \left( \lim_{\tau \to t^+ }\sigma_B(e^{2 \pi i \tau})    +\lim_{\tau \to t^-  }\sigma_B(e^{2 \pi i \tau} )\right) .$$
For $\omega = e^{2 \pi i t}$, we denote this limit $\sigma_{\sf w}(\omega)$ and for  ${\sf w} = W_K$, we denote  it $\sigma_K(\omega)$.   This is a step function that is integer-valued except perhaps at its discontinuities, where it equals its two-sided average.  Modulo 2, its value (except at the discontinuities) equals the rank of a representative; thus, for a knot, it is even-valued away from the discontinuities and is integer-valued at the discontinuities. 
As in the introduction, for such a ${\sf w}$ we write 
$$J_{\sf w}(e^{2 \pi i t }) = \frac{1}{2} \left( \lim_{\tau \to t^+ }\sigma_{\sf w}(e^{2 \pi i \tau})   -\lim_{\tau \to t^-  }\sigma_{\sf w}(e^{2 \pi i \tau} )\right),$$ and in the case ${\sf w}= W_K$ we write $J_K(\omega)$.

Both of the  functions $\sigma_K$ and $J_K$ are invariant under complex conjugation.  They are defined on the set of unit complex numbers, which we henceforth write as $\ss = \{ \omega \in \cc\ \big|\ |\omega| = 1\}$.  A fairly simple exercise shows that for a knot $K$, the fact that  $\det(V_K-V_K^{\sf T}) = \pm 1$ implies that  $\sigma_K(\omega) = 0$ for all $\omega$ close to 1. Given the properties of $\sigma_K$,   when we graph $\sigma_K(e^{2 \pi t})$, we will restrict to $t \in (0,1/2)$. 

\subsection{The signature and the four-genus}
We now briefly summarize a well-known result  that follows immediately from~\cite{taylor}.

\begin{theorem}\label{thm:taylor2} If $K$ bounds a surface of genus $h$ in $B^4$, for instance if $g_4(K) = h$, then $W_K$ has a $2h$-dimensional representative.   \end{theorem}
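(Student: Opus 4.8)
The plan is to reduce the statement to a purely algebraic fact about the Witt group $W(\qq(x))$: one lowers the rank of a representative by exhibiting a totally isotropic subspace of the correct dimension. Recall that if a nonsingular hermitian form $W_\Sigma$ of rank $2g$ over $\qq(x)$ admits a subspace $L$ on which the form vanishes identically, then $L \subseteq L^\perp$, and since the form is nonsingular $\dim L^\perp = 2g - \dim L$, so the form descends to a nonsingular form on $L^\perp / L$ of rank $2g - 2\dim L$ representing the same Witt class. Starting from any based Seifert surface $\Sigma$ for $K$ of genus $g$, so that $W_K = [W_\Sigma]$ by Levine's theorem~\cite{levine}, it therefore suffices to produce a totally isotropic subspace $L \subset (H_1(\Sigma;\qq(x)), W_\Sigma)$ with $\dim_{\qq(x)} L = g - h$; the quotient $L^\perp/L$ is then a $2h$-dimensional representative of $W_K$.

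First I would set up the geometry. Push the interior of $\Sigma$ into $B^4$ and glue it to the given genus-$h$ surface $F$ along the common boundary $K$, forming a closed orientable surface $C = \Sigma \cup_K F \subset B^4$ of genus $g+h$. A Mayer–Vietoris computation, using that $K$ is connected and that $[\partial \Sigma]=[\partial F]=0$ in $H_1(\Sigma)$ and $H_1(F)$, gives $H_1(C;\qq) \cong H_1(\Sigma;\qq) \oplus H_1(F;\qq)$, with $H_1(\Sigma;\qq)$ appearing as a direct summand. The essential point is that the exterior of $F$ carries more structure than mere null-homology in $B^4$: writing $X = B^4 \setminus \nu(F)$ one has $H_1(X) \cong \zz$ generated by a meridian, so the variable $x$ acts through the infinite cyclic cover $\tilde X \to X$. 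The inclusion of the knot exterior into $X$ then carries the rational Alexander module of $K$ — the module on which $W_\Sigma$ records the Blanchfield pairing — into $H_1(\tilde X;\qq)$.

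The key step is to show that the kernel of this induced map, transported to $H_1(\Sigma;\qq(x))$, is self-annihilating for $W_\Sigma$ and has dimension exactly $g-h$. Self-annihilation is the standard duality phenomenon: classes that die in the infinite cyclic cover of a rational homology bordism pair trivially under the Blanchfield/Seifert pairing, exactly as in the algebraic concordance arguments of Levine and Taylor. The genus of $F$ enters through the dimension count: the $\qq(x)$-dimension of the image is governed by $H_1$ of the corresponding cover of the exterior of $F$, and because $F$ has genus $h$ this image has dimension at most $2h$, forcing the annihilating kernel to have dimension at least $g-h$; nonsingularity (which pins down $\dim L^\perp$) then makes it exactly $g-h$. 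As a sanity check, the slice case $h=0$ recovers the assertion that $W_K$ is Witt trivial, and $h=g$ gives the trivial bound.

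I expect the main obstacle to be precisely this last point: converting the four-dimensional homological input — that $F$ has genus $h$ and that its classes die in $B^4$ — into the sharp \emph{$\qq(x)$-linear} statement that the self-annihilating subspace has dimension exactly $g-h$. This is a Poincaré–Lefschetz duality argument in the infinite cyclic cover, the half-lives–half-dies principle upgraded to the twisted setting, and it is exactly the content extracted from~\cite{taylor}; carrying it out rigorously requires checking that the relevant modules are torsion and that rational coefficients suppress the complications coming from $\zz$-torsion. Once this duality is established, the reduction of the first paragraph produces the $2h$-dimensional representative and completes the proof of Theorem~\ref{thm:taylor2}.
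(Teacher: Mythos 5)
Your overall strategy---produce a totally isotropic $\qq(x)$-subspace $L$ of dimension $g-h$ and pass to $L^\perp/L$---is sound, and your first paragraph is a correct reduction. But the step that actually produces $L$ has two genuine gaps. First, the dimension count does not close: if the image of the relevant map has $\qq(x)$-dimension at most $2h$, then the kernel inside a $2g$-dimensional domain has dimension at least $2g-2h$, not $g-h$; and a subspace of dimension $2(g-h)$ cannot be totally isotropic for a nonsingular form on a $2g$-dimensional space once $h<g/2$, so the bound ``image $\le 2h$'' cannot be the right input. The correct half-lives--half-dies statement in the infinite cyclic cover bounds the image by $g+h$ (half of $2g$, corrected by the genus of $F$), and getting that bound is precisely the hard duality argument you defer. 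Second, the kernel you describe lives in the rational Alexander module $H_1(\tilde{E}_K;\qq)$, a \emph{torsion} $\qq[x,x^{-1}]$-module, whereas $W_\Sigma$ is a nonsingular form on the $2g$-dimensional $\qq(x)$-vector space $H_1(\Sigma;\qq)\otimes\qq(x)$; ``transporting'' a self-annihilating submodule of the Blanchfield pairing to an isotropic $\qq(x)$-subspace for $W_\Sigma$ is the (real, but nontrivial) Seifert-form/Blanchfield-form correspondence, and it is not addressed. Since you ultimately concede that the crux is ``exactly the content extracted from \cite{taylor},'' the proposal does not supply an independent proof of that content, and as written the numerics would not survive being made precise.

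For comparison, the paper's proof is far more economical and sidesteps all of this: it quotes Taylor's theorem in the form that if $g_4(K)\le h$ then some basis of $H_1$ of a genus-$g$ Seifert surface makes the upper-left $(g-h)\times(g-h)$ block of the \emph{Seifert matrix} $V_F$ identically zero. Since $W_F=(1-x)V_F+(1-x^{-1})V_F^{\sf T}$, that zero block forces $W_F$ to split (after a change of basis) as $A\oplus B$ with $A$ of size $2(g-h)$ containing a half-dimensional isotropic subspace, hence Witt trivial, and $B$ of size $2h$. If you want to keep your four-dimensional argument, the honest route is to either cite Taylor's statement directly, as the paper does, or carry out the Poincar\'e--Lefschetz duality computation in the infinite cyclic cover of $B^4\setminus\nu(F)$ in full, with the corrected bound $\dim_{\qq(x)}(\mathrm{image})\le g+h$ and an explicit passage from the Alexander module back to the form $W_\Sigma$.
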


\begin{proof}  According to~\cite{taylor}, if $K$ bounds a Seifert surface of genus $g$ and bounds a surface of genus $h \le g$ in $B^4$, then with respect to some basis, the upper left $(g - h)\times (g-h)$ block of the Seifert  matrix $V_F$ has all entries 0.  It then follows that $W_F$ is Witt equivalent to a sum $A \oplus B$, where $A$ is $2(g-h) \times 2(g-h)$ and is Witt trivial, and $B$ is $2h \times 2h$.

\end{proof}
There is the immediate corollary.
\begin{corollary} For all $t \in (0,2)$, $g_4(K)  \ge  \frac{1}{2}\left| \sigma_K(e^{2 \pi i t}) \right| $. \end{corollary}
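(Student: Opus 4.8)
The plan is to feed the dimension count supplied by Theorem~\ref{thm:taylor2} into the very definition of $\sigma_K$, exploiting the elementary fact that the signature of a nonsingular hermitian form never exceeds its rank in absolute value.

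First I would set $h = g_4(K)$. By Theorem~\ref{thm:taylor2}, the Witt class $W_K$ is represented by some $2h \times 2h$ hermitian matrix $B(x)$ over $\qq(x)$. For every $\alpha \in \ss$ at which $B(\alpha)$ is defined and nonsingular, $B(\alpha)$ is an ordinary nonsingular hermitian matrix of size $2h$, so its signature $\sigma_B(\alpha)$ is an integer with $|\sigma_B(\alpha)| \le 2h$. Next I would invoke the definition of the signature function of a Witt class: since $B$ represents $W_K$, at each point of continuity $\omega = e^{2\pi i t}$ the value $\sigma_K(\omega)$ agrees with $\sigma_B(\omega)$, and hence $|\sigma_K(\omega)| \le 2h$ there.

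At the finitely many discontinuities, $\sigma_K(\omega)$ is by definition the two-sided average $\tfrac12\bigl(\lim_{\tau \to t^+}\sigma_B(e^{2\pi i \tau}) + \lim_{\tau \to t^-}\sigma_B(e^{2\pi i \tau})\bigr)$; each one-sided limit is the signature of $B$ at a nearby nonsingular point and so has absolute value at most $2h$, whence the average does too. Therefore $|\sigma_K(e^{2\pi i t})| \le 2h = 2\,g_4(K)$ for every $\omega = e^{2\pi i t} \in \ss$, in particular for all $t \in (0,2)$, and dividing by $2$ yields the stated bound.

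I expect no serious obstacle: essentially all the content is carried by Theorem~\ref{thm:taylor2}, with the only additional input being the bound relating signature and rank. The single point meriting a moment's care is the behavior at the discontinuities, where the bound must be passed through the two-sided averaging in the definition of $\sigma_K$ rather than applied to a single matrix evaluation; but because both one-sided limits inherit the bound $2h$, this step is immediate.
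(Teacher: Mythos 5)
Your argument is correct and is exactly the one the paper intends: the corollary is stated as an immediate consequence of Theorem~\ref{thm:taylor2}, the only added input being that a nonsingular hermitian $2h\times 2h$ matrix has signature of absolute value at most $2h$, with the bound passing through the two-sided average at discontinuities. The paper gives no further detail, so your write-up simply makes the ``immediate'' step explicit.
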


\section{Diagonalization and Crossing changes}\label{sec:change}
\subsection{Diagonalizaton} The field $\qq(t)$ has characteristic 0, and thus the matrix $W_F$ associated to a Seifert surface $F$  is congruent to a diagonal matrix; that is, it  can be diagonalized using simultaneous row and column operations.  We will write a diagonalization by listing its diagonal elements:  $[d_1, d_2, \cdots, d_{2g}]$. By scaling the corresponding basis of the underling vector space, we can clear the denominators of these diagonal elements and divide  by factors   of the form $f(t) f(t^{-1})$.  Thus, we can assume that each $d_i$ is  a product of distinct irreducible symmetric polynomials in $\qq[t, t^{-1}]$.  

We   now have the  following.

\begin{theorem}\label{thm:parity}  Let ${\sf w} \in W(\qq(t))$, let $\delta $ be an irreducible symmetric polynomial, and let  $\ba = \{\alpha_i\} \subset \ss$ denote a subset of the roots of $\delta$ that lie on the unit circle.   Then if    $\sf w$  is represented by an $N \times N$ matrix, we have 
$$N  \ge  \Big( \max_{\alpha_i \in \ba}  \{ \left| J_{K}(\alpha_i) \right| \} +   \max_{\alpha_i \in \ba}   \{ \left| \sigma_{K}(\alpha_i) \right|  \} \Big). $$
For all $i$ and $j$, $ J_{K}(\alpha_i) = \sigma_{K}(\alpha_j)   \mod 2$. \end{theorem}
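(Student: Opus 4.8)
The plan is to pass to a diagonal representative and to read off both the jump and the signature at each $\alpha_i$ from the signs of the diagonal entries as $\omega$ crosses the unit circle. Since $\qq(t)$ has characteristic $0$, the given $N\times N$ representative is congruent to a diagonal matrix, and as in the Diagonalization subsection I would scale the basis so that the diagonal entries $[d_1,\dots,d_N]$ are each a product of \emph{distinct} irreducible symmetric polynomials; congruence and scaling change neither $N$ nor the signature function. For $\omega\in\ss$ the matrix is Hermitian, so each $d_j(\omega)$ is real, and at a point where no $d_j$ vanishes one has $\sigma_{\sf w}(\omega)=\sum_j \operatorname{sign}(d_j(\omega))$.

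The key bookkeeping device is to split the indices according to divisibility by $\delta$. Call $d_j$ \emph{active} if $\delta\mid d_j$ and \emph{inactive} otherwise, and let $A$ be the number of active entries. Because $\delta$ is irreducible and each $d_j$ is squarefree, $d_j(\alpha_i)=0$ holds exactly when $\delta\mid d_j$; crucially this condition is the \emph{same} for every root $\alpha_i$ of $\delta$, so the active/inactive partition is independent of $i$. Moreover, since irreducible rational polynomials have simple roots, every active $d_j$ has a simple zero at each $\alpha_i$, while every inactive $d_j$ is nonzero there.

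I would then estimate the two quantities separately from this single diagonalization. As $\omega$ crosses $\alpha_i$, an inactive entry keeps its sign by continuity and an active entry flips sign, so only active entries contribute to the jump, each by $\pm1$; this gives $J_{\sf w}(\alpha_i)=\sum_{\text{active}}(\pm1)$, hence $|J_{\sf w}(\alpha_i)|\le A$ and $J_{\sf w}(\alpha_i)\equiv A\pmod 2$. At the point $\alpha_i$ itself the two-sided average kills each active entry (its two one-sided signs cancel) and returns $\operatorname{sign}(d_j(\alpha_i))$ for each inactive entry, so $\sigma_{\sf w}(\alpha_i)=\sum_{\text{inactive}}(\pm1)$, whence $|\sigma_{\sf w}(\alpha_i)|\le N-A$ and $\sigma_{\sf w}(\alpha_i)\equiv N-A\pmod 2$. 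Because the same $A$ governs both estimates, adding the two maxima yields $\max_i|J_{\sf w}(\alpha_i)|+\max_i|\sigma_{\sf w}(\alpha_i)|\le A+(N-A)=N$, the stated inequality.

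For the parity statement, the congruences above give $J_{\sf w}(\alpha_i)\equiv A$ and $\sigma_{\sf w}(\alpha_j)\equiv N-A\pmod 2$ for all $i,j$, so it suffices to show $N$ is even. For a knot Witt class this holds because $\sigma_K\equiv N\pmod 2$ at regular points while $\sigma_K(\omega)=0$ for $\omega$ near $1$ (the consequence of $\det(V_F-V_F^{\sf T})=\pm1$ recorded earlier), so that $A\equiv N-A$ and $J_K(\alpha_i)\equiv\sigma_K(\alpha_j)\pmod 2$. The main obstacle I anticipate is technical rather than conceptual: one must verify the sign change at a simple zero in the angular parameter by checking that $\tfrac{d}{dt}d_j(e^{2\pi i t})$ is real and nonzero at the relevant $t$, and, more importantly, one must use the reduction to squarefree entries to make the active/inactive dichotomy clean, since an entry with an even-order zero at $\alpha_i$ would vanish there yet contribute to neither bound as counted. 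The crux is simply that a single diagonalization simultaneously produces complementary bounds $A$ and $N-A$ with a common $A$, so that they add to exactly $N$.
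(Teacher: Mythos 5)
Your proposal is correct and follows essentially the same route as the paper: diagonalize over $\qq(t)$, partition the diagonal entries by divisibility by $\delta$ (your $A$ is the paper's $m$, your $N-A$ its $n$), bound the jump by the first count and the signature by the second, and add; the parity claim follows from the same congruences together with the evenness of $N$ for a knot. Your extra care with the squarefree reduction and the sign change at simple zeros, and your alternative justification that $N$ is even (via $\sigma_K\equiv N \bmod 2$ and vanishing near $\omega=1$ rather than via Witt invariance of the rank mod $2$), are fine but do not change the substance of the argument.
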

\begin{proof} 
Choose a  diagonal  representation of ${\sf w}$ of the form  $[f_1 \delta , \ldots, f_m \delta , g_1 , \ldots, g_n ]$, where the $f_i$ and $g_i$ are symmetric  polynomials that are relatively prime to $\delta$.     The jump at $\alpha_i$ is given as the sum of the jumps, each $\pm1$, arising from the diagonal elements of the form $f_i \delta$.  Thus,  $m \ge  \left| J(\alpha_i) \right| $ for all $i$.  The signature is determined by the signs of the $g_i$ at $\alpha_i$.  Thus, $n \ge  \left| \sigma_K(\alpha_i) \right| $ for all $i$.  This completes the proof of the inequality.

The prove  the last statement, concerning the parities of the jumps and signatures, we observe that modulo $2$, $J_K(\alpha_i) = m \mod 2$ and $\sigma_K(\alpha_j) = n \mod 2$.   In addition, $m + n = N$.  Finally, for a knot $K$, $W_F$ is a $2g \times 2g$ matrix.  The proof is completed by noting that  Witt equivalence preserves the rank of a representative, module two,

\end{proof}

\subsection{Crossing changes}
In   considering signed crossing changes, the following result is useful.  A proof can be constructed from a careful examination of Seifert's algorithm for constructing Seifert surfaces.  One proof is presented in~\cite{kim-livingston}, where the focus was on the  effect of crossing changes on the Alexander polynomial.

\begin{theorem}\label{thm:seifert} If $K_+$ and $K_-$ differ by a  crossing change from positive to negative, then they bound  Seifert surfaces $F_+$ and $F_-$ of the same genus, $g$, with the following property:  for   appropriate choices of bases for homology, the Seifert forms are identical except for the lower right entry:  $V_{F_-}^{2g,2g} - V_{F_+}^{2g,2g} = 1$.
\end{theorem}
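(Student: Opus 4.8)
The plan is to realize the crossing change concretely through Seifert's algorithm and then track its effect on the Seifert form one entry at a time. First I would fix a connected oriented diagram $D_+$ for $K_+$ in which the distinguished positive crossing $c$ is displayed, and apply Seifert's algorithm to obtain $F_+$. The essential observation is that Seifert's smoothing of a crossing depends only on the orientations of the two strands and not on which strand is the overpass. Consequently the diagram $D_-$ obtained by switching $c$ produces exactly the same Seifert circles and the same collection of bands as $D_+$: the surfaces $F_+$ and $F_-$ are the same abstract surface, built on the same disks with the same $1$-handles, differing only in that the band $b_c$ attached at $c$ carries a half-twist of the opposite sign. In particular $F_+$ and $F_-$ have the same genus $g$, and any family of curves drawn on the common surface may be read simultaneously as curves on $F_+$ and on $F_-$.

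Next I would choose the homology basis adapted to this picture. Let $G$ be the Seifert graph, with one vertex per Seifert circle and one edge per crossing; since $D_+$ is connected, $G$ is connected with $b_1(G)=2g$. Choosing a spanning tree $T$, the fundamental cycles of the $2g$ edges of $G\setminus T$ form a basis $\{a_1,\dots,a_{2g}\}$ of $H_1(F_+)$, each $a_j$ running once over its own non-tree edge and otherwise only over tree edges. When the edge $e_c$ corresponding to $c$ is not a bridge of $G$, I select $T$ with $e_c\notin T$ and take $a_{2g}$ to be the fundamental cycle of $e_c$; then $a_{2g}$ traverses $b_c$ exactly once while every other $a_j$ misses $b_c$ entirely. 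Because the two surfaces differ only by twisting localized in $b_c$, the linking numbers $\operatorname{lk}(a_i,a_j^+)$ defining the entries of $V_{F_\pm}$ are literally unchanged whenever both $a_i$ and $a_j$ avoid $b_c$, and also when just one of them does, since the twist alters only the self-crossings of the strand running through $b_c$. Thus every entry except $V^{2g,2g}$ agrees for $F_+$ and $F_-$, and the change is a rank-one update concentrated in the lower-right corner.

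I expect the main obstacle to be precisely the hypothesis that $e_c$ can be made non-separating, and this is where the ``careful examination of Seifert's algorithm'' is needed. If $e_c$ is a bridge it lies in every spanning tree and several fundamental cycles may run through $b_c$, so the clean reduction fails; the remedy is to apply a congruence of the Seifert matrix, replacing the offending basis curves $a_{i_j}$ by differences $a_{i_j}-a_{i_1}$ so that only one curve passes through $b_c$, and then to verify that under this congruence the apparent off-diagonal contributions of the twist genuinely cancel, leaving a single diagonal change. (Equivalently, one may view the switch as $\mp1$ surgery on a crossing circle $C$ placed on the surface, reducing the whole question to the self-framing of one curve.) Granting the reduction, the argument finishes with a purely local linking computation: a curve running once through a band records that band's framing in its self-linking $\operatorname{lk}(a_{2g},a_{2g}^+)$, and replacing the positive half-twist of $b_c$ by a negative one raises this self-linking by exactly one. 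Fixing the standard sign convention for the Seifert form then yields $V_{F_-}^{2g,2g}-V_{F_+}^{2g,2g}=1$, as claimed.
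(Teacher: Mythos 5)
The paper does not actually supply a proof of Theorem~\ref{thm:seifert}; it defers to Kim--Livingston and describes the argument as a ``careful examination of Seifert's algorithm,'' which is precisely the route you take. Your main case is handled correctly: Seifert's algorithm applied to $D_+$ and to $D_-$ produces the same disks and bands with only the half-twist of $b_c$ reversed; choosing a spanning tree of the Seifert graph with $e_c\notin T$ puts exactly one fundamental-cycle basis curve through $b_c$; and since the two embeddings differ only inside a ball that no other basis curve enters, every entry $\operatorname{lk}(a_i,a_j^+)$ with $(i,j)\ne(2g,2g)$ is unchanged while the last diagonal entry shifts by one. Up to the sign bookkeeping you defer, that part is sound.

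The genuine gap is in your exceptional case, and you have misdiagnosed what goes wrong there. You claim that when $e_c$ is a bridge ``several fundamental cycles may run through $b_c$'' and propose to consolidate them by replacing curves $a_{i_j}$ with $a_{i_j}-a_{i_1}$. But no cycle of a graph can contain a bridge, so when $e_c$ is a bridge \emph{no} basis curve traverses $b_c$ at all: with your construction the Seifert matrices of $F_+$ and $F_-$ are literally equal, and the desired conclusion $V_{F_-}^{2g,2g}-V_{F_+}^{2g,2g}=1$ fails for those surfaces. (The basis-change remedy you describe is the right tool for the different situation in which one is handed a basis with several curves through a non-bridge band; it is vacuous here.) The bridge case therefore needs a separate argument: a bridge of the Seifert graph corresponds to a nugatory crossing, so $K_-\simeq K_+$, and one then exhibits the required pair of surfaces by stabilizing a common Seifert surface with a tube whose framing is $0$ in one case and $1$ in the other, realizing matrices $V\oplus\left(\begin{smallmatrix}0&1\\0&0\end{smallmatrix}\right)$ and $V\oplus\left(\begin{smallmatrix}0&1\\0&1\end{smallmatrix}\right)$ for the same knot; alternatively, reduce the diagram before beginning. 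With that case repaired, your proof goes through.
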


\begin{lemma} \label{cor:block}Let $\delta$ be a symmetric irreducible polynomial.  The Witt classes for $K_+$ and $K_-$ decompose as
$$   W_{K_{\pm}} =  [f_1 \delta, \ldots , f_m\delta, g_1,\ldots ,  g_{n} ] \oplus 
\left(
\begin{array}{ c   c}
a(x) & \overline{b(x)} \\
{b(x)} &  d(x) +  \epsilon_{\pm} (1-x)(1-x^{-1})\\
\end{array}
\right),
$$
where the $f_i$ and $g_i$ are symmetric polynomials that are relatively prime to $\delta$,   $\epsilon_+ = 0$,  and  $\epsilon_- = 1 $. Furthermore, $m + n + 2 = 2g$. 
\end{lemma}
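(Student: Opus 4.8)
The plan is to start from Theorem~\ref{thm:seifert} and compute the difference $W_{F_-}-W_{F_+}$ directly from the definition $W_F=(1-x)V_F+(1-x^{-1})V_F^{\sf T}$. Since $V_{F_-}$ and $V_{F_+}$ agree except that $V_{F_-}^{2g,2g}-V_{F_+}^{2g,2g}=1$, and since the elementary identity $(1-x)+(1-x^{-1})=(1-x)(1-x^{-1})$ holds, every entry of $W_{F_-}$ and $W_{F_+}$ agrees except the lower right one, where $W_{F_-}^{2g,2g}-W_{F_+}^{2g,2g}=(1-x)(1-x^{-1})$. Thus $W_{F_-}=W_{F_+}+(1-x)(1-x^{-1})E$, where $E$ is the matrix with a single $1$ in the $(2g,2g)$ position. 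Because congruent matrices represent the same class in $W(\qq(x))$, it suffices to exhibit a single change of basis that simultaneously puts both matrices into the asserted form.

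The heart of the argument is to choose one invertible $P$, with columns $v_1,\ldots,v_{2g}$, that puts $W_{F_+}$ into block form $[f_1\delta,\ldots,g_n]\oplus M$ while keeping the correction term localized. The key observation is a localization principle: if $e_{2g}$ occurs only in the last column $v_{2g}$ (with coefficient $1$), then $\bar P^{\sf T}EP$ has $(i,j)$ entry equal to the product of the $e_{2g}$-coefficients of $v_i$ and $v_j$, hence its only nonzero entry is a $1$ in the $(2g,2g)$ slot. Consequently $\bar P^{\sf T}W_{F_-}P=\bar P^{\sf T}W_{F_+}P+(1-x)(1-x^{-1})E$ differs from the block form of $W_{F_+}$ only by the addition of $(1-x)(1-x^{-1})$ to the lower right entry of $M$. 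This is exactly the relationship between the $\epsilon_+=0$ and $\epsilon_-=1$ blocks, so I am reduced to producing a basis of the required shape for $W_{F_+}$ alone.

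To build this basis I would work inside $H=\text{span}(e_1,\ldots,e_{2g-1})$, so that every chosen vector except the last automatically has no $e_{2g}$-component. Write $W'$ for the restriction of $W_{F_+}$ to $H$. Because $W_{F_+}$ is nonsingular on all of $V$, the radical of $W'$ injects into $\qq(x)$ via $\rho\mapsto\langle\rho,e_{2g}\rangle$, so it is at most one dimensional and $\text{rank}(W')\ge 2g-2$. I therefore choose a nonsingular subspace $H_1\subseteq H$ of dimension $2g-2$, diagonalize $W'|_{H_1}$ by operations internal to $H_1$, and clear denominators so that each diagonal entry is a product of distinct irreducible symmetric polynomials as in Section~\ref{sec:change}; sorting these into those divisible by $\delta$ and those coprime to $\delta$ gives $[f_1\delta,\ldots,f_m\delta,g_1,\ldots,g_n]$ with $m+n=2g-2$, whence $m+n+2=2g$. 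Setting $B=H_1^\perp$ (two dimensional and nonsingular), I take $v_{2g-1}$ to span the one dimensional intersection $B\cap H$ and $v_{2g}$ to be a vector of $B$ normalized to the form $e_{2g}+h'$ with $h'\in H$; then $M$ is the Gram matrix of $W_{F_+}$ on $\{v_{2g-1},v_{2g}\}$, and $P$ has the required column structure.

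I expect the main obstacle to be verifying the dimension bookkeeping that makes this split possible: one must show that the radical of $W'$ is at most one dimensional, so that a nonsingular $(2g-2)$-dimensional $H_1$ exists, and that $H_1^\perp$ genuinely meets the $e_{2g}$ direction, so that $v_{2g}$ can be normalized to $e_{2g}+h'$ and the correction term lands precisely in the corner of $M$. (In the degenerate case $B\cap H$ is exactly the radical line of $W'$, which forces the upper left entry of $M$ to vanish, but the construction goes through uniformly.) Once this localization is in place, the conclusion for the Witt classes $W_{K_\pm}$ is immediate, since $\bar P^{\sf T}W_{F_\pm}P$ are congruent to $W_{F_\pm}$ and hence represent $W_{K_\pm}$.
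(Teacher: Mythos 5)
Your proposal is correct and follows essentially the same route as the paper: exploit that $W_{F_-}-W_{F_+}=(1-x)(1-x^{-1})E$ is concentrated in the $(2g,2g)$ slot, note that the common upper-left $(2g-1)\times(2g-1)$ block has nullity at most one, diagonalize it on a nonsingular $(2g-2)$-dimensional subspace, and sweep the remainder into a final $2\times 2$ block. Your explicit ``localization principle'' for $\bar P^{\sf T}EP$ simply makes precise what the paper leaves implicit, namely that one change of basis serves both $W_{F_+}$ and $W_{F_-}$ simultaneously.
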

\begin{proof}   Consider the matrix representation of $W_{K_\pm}$ determined by the Seifert forms given in Theorem~\ref{thm:seifert}.  The determinant is nonzero:  an elementary exercise in linear algebra shows that the upper left  $(2g-1) \times (2g-1)$ submatrix has nullity at most one.  Thus, this block can be diagonalized via a change of basis so that the first $(2g-2)$ diagonal entries are nonzero.  The resulting $2g \times 2g$ matrix can have nonzero entries in the last column and bottom row, but the diagonal entries can be used to clear these out, with the possible  exception of the last two rows and columns.   This yields the desired decomposition.
\end{proof}
We can now prove Proposition~\ref{thm:changes}, which we restate.\vskip.05in

\noindent{\bf Proposition~\ref{thm:changes}.} 
{\it  Let $K_+$ be a knot, let $\delta$ be an irreducible rational polynomial, and let $ \{\alpha_1, \ldots, \alpha_k\} \subset\ss$ with $k >0$ satisfy $\delta(\alpha_i) = 0 $ for all $i$.    If a crossing in a diagram  for $K_+$ is changed from positive to negative to yield a knot $K_-$, then one of the following two possibilities occurs.

\begin{enumerate}
\item For every $\alpha_i$,   $ J_{K_-} (\alpha_i)- J_{K_+}(\alpha_i)  = 0$ and $\sigma_{K_-}(\alpha_i) - \sigma_{K_+}(\alpha_i) \in \{0, 2\}$.\vskip.05in
\item For every $\alpha_i$,   $ J_{K_-} (\alpha_i) - J_{K_+}(\alpha_i)  \in \{-1,  1\}$ and $\sigma_{K_-}(\alpha_i) - \sigma_{K_+}(\alpha_i) = 1$.
\end{enumerate} 
}
\vskip.05in

\begin{proof}  The difference $W_{K_-} - W_{K_+}$ is represented by the differences of the corresponding $2 \times 2$ block matrices given in Lemma~\ref{cor:block}, so we  restrict our attention to these, calling them ${\sf w}_-$ and ${\sf w}_+$.    
If the entry $a(x) = 0$, then ${\sf w}_+$ and ${\sf w}_-$ are  both Witt trivial, so the difference of jumps is 0, as is the difference of the signature; thus    Case (1) is satisfied.

If $a(x) \ne 0$, then the forms can be further diagonalized so that the only place at which they differ is the last diagonal element.  This diagonal element will be of the form 
$$ \frac{p(x)}{q(x)}  + \epsilon_{\pm} (1-x)(1- x^{-1}),$$ for some $p(x)$ and $q(x)$. We write the  $1 \times 1$ matrices as  $$v_+ =\bigg( \frac{p(x)}{q(x)}  \bigg)\text{\ \ \  and \ \ \ }   v_- =  \bigg(\frac{p(x)}{q(x)}  + (1-x)(1- x^{-1})\bigg).  $$  We will refer to the entries of these two matrices as $v_+(x)$ and $v_-(x)$.  It remains to analyze the jump functions and signature functions associated to these two matrices. 

For each value of $i$, we associate to $v_\pm$ the jump and signature of the form at $\alpha_i$, denoting these $j^i_\pm$ and $\sigma^i_\pm$.  We proceed in a series of steps.\vskip.05in

\noindent{\bf Step 1:} Consider $v_+$.  At points $\alpha$ close to but not equal to $\alpha_i$, the signature is either $1$ or $-1$.  If the signature changes sign at $\alpha_i$, then $\left| j^i_+ \right| = 1 $ and $\sigma^i_+ =0$.  On the other hand, if the signature doesn't change sign, then $\left| j^i_+ \right| = 0 $ and $\sigma^i_+ =\pm 1$.  The same properties hold for $v_-$.\vskip.05in

\noindent{\bf Step 2:}   Since $(1-\omega)(1- \omega^{-1}) >0$ for all $\omega  \in \ss$ with $\omega \ne 1$, we have that $\sigma^i_- - \sigma^i_+ \ge 0$. \vskip.05in

\noindent{\bf Step 3:}  Given Steps 1 and 2, the only possible nontrivial changes of the pairs $(\left| j^i_+  \right|, \sigma^i_+) \to (\left| j^i_- \right|, \sigma^i_-)$ are:
\begin{itemize} 

\item {\bf Type 1:} $(0,-1) \to (0, 1)$\vskip.05in
\item {\bf Type 2:} $(0,-1) \to (1, 0)$\vskip.05in
\item {\bf Type 3:} $(1,0) \to (0, 1)$\vskip.05in

\end{itemize}\vskip.05in
These are consisent with the statement of Proposition~\ref{thm:changes}.\vskip.05in

\noindent{\bf Step 4:}  The proof of the proposition is completed by showing that a nontrivial change of Type 2 or Type 3  occurs at some $\alpha_i$, the same change occurs at all $\alpha_i$.  After changes of basis, the forms can be written as Witt equivalent forms, for which we use the same names,
$$v_+ = (f_+(x)\delta(x)^{\epsilon_+}) \hskip.5in  v_+ = (f_-(x)\delta(x)^{\epsilon_-}) .$$
Here $f_\pm$ are symmetric polynomials that are relatively prime to $\delta$ and $\epsilon_\pm$ are either 0 or 1.  There are four cases to consider.
\begin{itemize}
\item If $(\epsilon_+, \epsilon_-) = (0,0)$, then there are not nontrivial jumps at any $\alpha_i$, so no changes of Type 2 or 3 occur.\vskip.05in
\item If $(\epsilon_+, \epsilon_-) = (1,0)$, then at each $\alpha_i$ there is jump for $v_+$ but not for $v_-$, so for all $\alpha_i$ we see a change of Type 3.
\item If $(\epsilon_+, \epsilon_-) = (0,1)$, then at each $\alpha_i$ there is no $v_+$ jump but for  $v_-$ there is  a jump, so for all $\alpha_i$ we see a change of Type 2.\vskip.05in
 \item If $(\epsilon_+, \epsilon_-) = (1,1)$, then at each $\alpha_i$ both  $v_+$and  $v_-$ have nonzero jumps, so     no change of Type 2 or 3 occurs at any of the  $\alpha_i$. \vskip.05in

\end{itemize}
Together, these   steps complete the proof of the proposition.
 \end{proof}


\section{Bounds on the unknotting number and signed unknotting number}\label{sec:alg}
 
 To begin, we have a corollary of Proposition~\ref{thm:changes}.
\vskip.05in
\begin{corollary} {\it  Let $K \subset  S^3$ be a knot and let  $ \{\alpha_1, \ldots, \alpha_k\} \subset \ss$  be a nonempty subset of the complex roots of an irreducible rational polynomial.  Let $\upj$ denote the maximum   of $\{\big|J_K(\alpha_i)\big|\}$;   let $\downs$ and $\ups$ denote the minimum and maximum of $\{ \sigma_K(\alpha_i)\}$.    A crossing change in $K$ from positive to negative either: (1) leaves $\upj$ unchanged and leaves each of  $\downs$ and $\ups$   unchanged or increased by 2; or (2) changes $\upj$ by 1 and increases both $\downs$ and $\ups $ by 1.
}
\end{corollary}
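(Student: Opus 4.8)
The plan is to derive this corollary directly from Proposition~\ref{thm:changes}, which already establishes the two dichotomous cases governing how a single positive-to-negative crossing change affects the pointwise jump values $J_{K_\pm}(\alpha_i)$ and signature values $\sigma_{K_\pm}(\alpha_i)$ at each root. The corollary simply repackages this pointwise information into statements about the extremal quantities $\upj = \max_i |J_K(\alpha_i)|$, $\downs = \min_i \sigma_K(\alpha_i)$, and $\ups = \max_i \sigma_K(\alpha_i)$. The key conceptual point, which I would emphasize at the outset, is that Proposition~\ref{thm:changes} guarantees the \emph{same} case (and in Case (2) the \emph{same} sign of jump change) holds uniformly across all $\alpha_i$ simultaneously; this uniformity is precisely what allows control of the extrema.

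First I would dispose of Case (1) of the proposition. Here $J_{K_-}(\alpha_i) = J_{K_+}(\alpha_i)$ for every $i$, so the multiset $\{|J_{K_-}(\alpha_i)|\}$ is identical to $\{|J_{K_+}(\alpha_i)|\}$ and hence $\upj$ is unchanged. For the signatures, each $\sigma_{K_-}(\alpha_i) - \sigma_{K_+}(\alpha_i) \in \{0,2\}$. Since taking a maximum or minimum commutes with adding a quantity only when that quantity is constant, I must note that the increment need not be the same at every point; nonetheless, because each individual value either stays fixed or increases by $2$, both $\downs$ and $\ups$ can only stay the same or increase by $2$ (the new minimum is at least the old minimum, and at most the old minimum plus $2$, and similarly for the maximum). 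This gives exactly conclusion (1) of the corollary.

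For Case (2) of the proposition, the uniform sign is crucial. Here $\sigma_{K_-}(\alpha_i) - \sigma_{K_+}(\alpha_i) = 1$ for every $i$, so every signature value increases by exactly $1$; since the increment is constant, both $\downs$ and $\ups$ increase by exactly $1$. For the jumps, $J_{K_-}(\alpha_i) - J_{K_+}(\alpha_i) \in \{-1,1\}$ with a common sign across all $i$. I would argue that because the change is a uniform $+1$ or uniform $-1$ applied to every $J_{K_+}(\alpha_i)$, and because these jump values share a common parity (as recorded in Theorem~\ref{thm:parity}, where $J_K(\alpha_i) \equiv \sigma_K(\alpha_j) \bmod 2$ forces all the $|J_K(\alpha_i)|$ to move coherently), the maximum absolute value $\upj$ changes by exactly $1$. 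The main obstacle is this last point: I must verify that a uniform shift of all jump values by $\pm 1$ changes the \emph{maximum of the absolute values} by exactly $1$ rather than leaving it unchanged or shifting it by more. This follows once one observes that the $J_K(\alpha_i)$ all have the same parity, so adding a uniform $\pm 1$ to each shifts them all across the parity boundary together, moving $\max_i|J_K(\alpha_i)|$ by precisely one; I would spell out this parity-coherence argument carefully, as it is the one place where the conclusion could fail without the uniformity supplied by Proposition~\ref{thm:changes} and the parity statement of Theorem~\ref{thm:parity}.
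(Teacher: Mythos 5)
Your proof is correct and follows essentially the same route as the paper: Case (1) is immediate, and in Case (2) the constant $+1$ shift handles $\downs$ and $\ups$ while the parity coherence of the jumps supplied by Theorem~\ref{thm:parity} is exactly what the paper invokes to force $\upj$ to change by precisely $1$. One small caution: Proposition~\ref{thm:changes} does not actually assert that the sign of $J_{K_-}(\alpha_i)-J_{K_+}(\alpha_i)$ is common to all $i$, but your argument never truly needs that claim --- each $\big|J_K(\alpha_i)\big|$ changes by at most $1$ and every jump flips parity, which already pins down the change in $\upj$ to exactly $1$.
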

\vskip.05in

\begin{proof}Changes of the first type in Proposition~\ref{thm:changes} clearly leave $\upj$ unchanged and leave each of $\downs$ and $\ups$ unchanged or increased by 2. 

Changes of the second type, since they increase every signature by $1$, increase the minimum and maximum signature  by 1.  The condition on the change in $\upj$ is a little more subtle.  For instance, if it were possible that one jump is $1$ and one jump is $2$, then after the change, it might be that the first jump is 2 and the second is 1, and thus the maximum absolute value would not change.  However, as stated in Theorem~\ref{thm:parity},  the jumps all have the same parity.  Thus, the parity of the jumps switch for such a crossing change, so the maximum absolute value must also change.

\end{proof}

\subsection{Unsigned unknotting number bounds}
Our main goal in this section is the following theorem, as stated in the introduction.  \vskip.05in
\noindent{{\bf Theorem \ref{thm:details}.}  {\it  Let $K \subset  S^3$ be a knot and let  $ \{\alpha_1, \ldots, \alpha_k\} \subset \ss$  be a nonempty subset of the complex roots of an irreducible rational polynomial.  Let $\upj$ denote the maximum   of $\{\big|J_K(\alpha_i)\big|\}$;   let $\downs$ and $\ups$ denote the minimum and maximum of $\{ \sigma_K(\alpha_i)\}$.    Suppose that  $\ups \ge 0$.  If $\downs \le \upj$ then $u(K) \ge \upj + ( \ups -\downs)/2$.  If $\downs \ge \upj$ then $u(K) \ge (\upj +  \ups)/2$. }
\vskip.05in

\begin{proof} We consider the set 
$$\Lambda =  \left\{    \left( \upjs,    \downss , \upss    \right)  \in   \zz \oplus \zz \oplus\zz \  \big|\   \upjs = \downss = \upss \mod 2    \right\}.$$ 
 We define two sets of functions from $\Lambda$ to itself.  The first  set consists of what we call  {\it $F$--type functions}.  These, which do not change the value of  $\upjs$, are as follows:
\begin{itemize}
\item $F^-_1( \upjs, \downss , \upss)  =  (\upjs, \downss -2 , \upss)$\vskip.05in
\item
$F^-_2(  \upjs, \downss , \upss)  =  (\upjs,  \downss , \upss-2)$\vskip.05in
\item $F^-_3(  \upjs, \downss , \upss)  =  (\upjs,  \downss -2 , \upss-2)$\vskip.05in
\item $F^+_1(  \upjs, \downss , \upss)  =  (\upjs,  \downss +2 , \upss)$\vskip.05in
\item
$F^+_2(  \upjs, \downss , \upss)  =  (\upjs,  \downss, \upss+2)$\vskip.05in
\item $F^+_3(  \upjs, \downss , \upss)  =  (\upjs,  \downss +2 , \upss+2)$.
\end{itemize}
Functions of the second type, {\it $G$--type functions}, change the value of $\upjs$. 
These are defined as follows:
\begin{itemize}
\item $G^-_1( \upjs, \downss , \upss)  = (\upjs -1,  \downss -1 , \upss -1)$\vskip.05in

\item $G^-_2(  \upjs, \downss , \upss)  =  (\upjs +1,  \downss -1 , \upss -1)$\vskip.05in

\item $G^+_1(  \upjs, \downss , \upss)  = (\upjs - 1,  \downss +1 , \upss +1)$\vskip.05in

\item $G^+_2(  \upjs, \downss , \upss)  = (\upjs + 1,  \downss +1 , \upss +1)$.\vskip.05in

\end{itemize}

For a given knot $K$, a crossing change affects the value of the associated pair $(\upj, \downs, \ups)$ by applying one of these functions.   The superscripts $+$ and $-$ correspond to whether the crossing changes is positive to negative or negative to positive, respectively.  A sequence of crossing changes that results in an unknot yields a sequence of   these functions which in composition carry  $(\upj, \downs, \ups)$ to $(0,0,0)$.  

We now consider a given element $(\upj,  \downs , \ups)\in \Lambda$.  For the proof of the theorem, we can assume $\upj \ge 0$ and $\downs \le \ups$.   We ask for the minimum length of a sequence of these functions that can reduce  $(\upj,  \downs , \ups) $ to $(0, 0,0)$.  A simple observation is that the  $F$--type functions commute with the $G$--type functions, so we can assume that a minimal length sequence consists of a sequence of $G$--type functions followed by a sequence of $F$--type functions.  (Here, the order of the sequence is in terms of the order of composition; in function notation, $f \circ g$ denotes $g$ followed by $f$.)

Since the $F$--type functions do not change the value of $\upjs$,  the initial application of the $G$-type functions reduces $\upj$ to $0$.   It follows that by commuting elements in the initial sequence of $G$--type functions, we can assume the sequence begins with $\upj$ terms of type $G_1^\pm$ (which together decrease the  $\upjs$--coordinate to 0) followed by a  sequence of $G$--type functions that alternately increase and decrease the $\upjs$--coordinate by 1. 

Next observe that a pair of $G$--functions that raise and then lower the $\upjs$--coordinate compose to give a single function, either the identity or one of  $F_3^-$ or $F_3^+$. Thus, in a minimum length sequence, such pairs do not appear, and hence there are   precisely $\upj$ of the $G$--type functions followed by a sequence of $F$--type functions.

If one considers all possible sequences of  $G$--type functions of length $\upj$ that convert $(\upj,\downs,\ups)$ to a triple with $\upjs$--coordinate 0, the possible ending values of $(\downss, \upss)$ are $(\downs +\alpha, \ups +\alpha)$, where $-\upj \le \alpha \le \upj$.   Each $F$--type function reduces the difference $\ups - \downs$ by at most 2.  Thus at least 
$$\left( (\ups +\alpha) - (\downs +\alpha) \right)/2 =  (\ups - \downs)/2$$ applications of $F$--type functions are required to reduce this pair to $(0,0)$.  In fact, if for some $\alpha$ the interval  
$(\downs +\alpha, \ups +\alpha)$ contains 0, a sequence of that length will suffice.  There will be such an $\alpha$ if $\downs \le \upj$.  Thus, in this setting  the minimum length sequence is $\upj +(\ups - \downs)/2$, as desired.

 On the other hand, if $\downs > \upj$, then we also have $\ups >\upj$.  In this case, the sequence of $G$--type functions has reduced the $\upss$--coordinate to no less than $\ups - \upj$, so at least another $(\ups - \upj)/2$ steps are required.  Thus, the minimal length of the sequence is at least $$\upj + (\downs - \upj)/2 + (\ups - \downs)/2 = (\upj + \ups)/2.$$  This completes the proof of Theorem~\ref{thm:details}.
\end{proof}

 \subsection{Signed unknotting number bounds}

 In the proof of Theorem~\ref{thm:details}, at one step we considered  the condition that an interval $[ \downs +\alpha, \ups +\alpha ]$ contained 0.  If the argument is examined closely, in the case that $\downs < \upj$  there can be more than one  $\alpha$ for which this holds.  The effect of this is to complicate the count of negative and positive  shifts that will appear in the sequence of functions that reduce the jumps and signatures to 0.  \vskip.05in
 
\noindent{\bf Theorem~\ref{thm:signs}.}  {\it   Let $K$  and $(\upj, \downs , \ups )$ be as in the statement of Theorem~\ref{thm:details}.  Suppose that $\ups \ge 0$. 
\begin{enumerate} 

\item  If $\downs \le \upj$, then unknotting $K$ requires at least $(\upj + \ups)/2$ negative to positive crossings and $(\upj - \downs)/2$ positive to negative crossing changes.  \vskip.05in
\item If $\downs \ge \upj$, then unknotting $K$ requires at least $(\upj + \ups)/2$ negative to positive crossing changes. 
\end{enumerate}}

\begin{proof} Suppose that the sequence of functions that reduces $\upj$ to $0$ has $a$ terms that lower the $\downss$ and $\upss$ coordinates. That sequence has $\upj - a$ terms that increase the  $\downss$ and $\upss$ coordinates.  The application of these functions carries the pair $(\downs, \ups)$ to $(\downs +\upj -2a, \ups +\upj -2a)$.  Assume this interval contains 0.  Then the sequence of $F$--type functions that carry this pair to $(0,0)$  must have $-( \downs +\upj -2a)/2$ terms that increase the smaller coordinate and  $( \ups +\upj -2a)/2$ terms that decrease the larger coordinate.  Summing these counts gives the desired result.
\end{proof}

\begin{example}  From Example~\ref{example1} we see that for the knots $-5_1 \cs -10_{132}$ and $-5_1, \cs 10_{132}$   $(\upj,  \downs, \ups )$ is  
 $ (2,  0, 2)$ or $(2, 2,4)$, respectively.  In both cases $\downs \le \upj$. Thus, applying Theorem~\ref{thm:signs}, we see that unknotting $-5_1 \cs -10_{132}$ requires at least two crossing changes from negative to positive and one crossing change from positive to negative.  To unknot $-5_1  \cs 10_{132}$ requires at least three crossing changes from negative to positive.
\end{example}

\section{Polynomial splittings and signed unknotting numbers}\label{sec:polynomial}

The bounds on the unknotting number developed in the previous sections depend on a choice of polynomial.  This section presents an example for which there are two relevant polynomials to consider.  Either one provides a lower bound of three for the unknotting number.  However, for one of the polynomials,  when signs are considered it will be seen that unknotting requires at least two changes from negative to positive and one change from positive to negative.  Using the other polynomial, we will see that at least three changes from negative to positive are required.  Combining these two results, we see that at least three changes from negative to positive and one change from positive to negative are required, and hence the unknotting number must be at least four.  
 
\begin{example} \label{ex:split}

We consider the knot $K = 2(3_1) -5_1- 8_2 +10_{132} -11_{n6}$. Figure~\ref{fig:twopoly} illustrates the graph of its signature function. The scale is such that $\sigma_K(\alpha_1) = 2$.  The data we use, including the signature function, can be found in~\cite{cha-livingston}.

\begin{figure}[h]
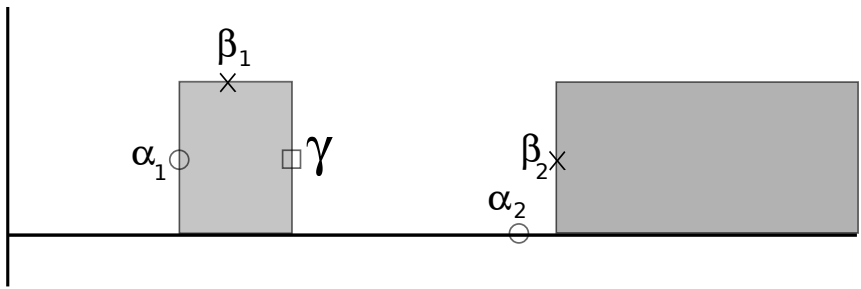

\fig{.8}{twopoly}\label{fig:twopoly}
 \caption{Signature function for $2(3_1) -5_1- 8_2 +10_{132} -11_{n6}$.}\label{fig:twopolygraph}\end{figure}
 
 The relevant roots of the Alexander polynomial are of the form $e^{2\pi i t}$ with $0<t < 1/2$. 
 \begin{itemize}
 \item $K = 3_1$: \vskip.05in
 \begin{enumerate}[{  $\bullet$}]
 \item  $\delta_1 =\Delta_K(x) = x^2 - x +1$
 \vskip.05in \item roots ($\gamma$), $t \approx .167$.\vskip.05in
 \end{enumerate}

 \item $K = 5_1$ or $K=10_{132}$: \vskip.05in
 \begin{enumerate}[{  $\bullet$}]
 \item  $ \delta_2 = \Delta_K(x) =x^4 - x^3 + x^2 - x +1$\vskip.05in
  \item roots ($\alpha_1, \alpha_2$), $t = .1, t=.3$.
 \end{enumerate}\vskip.05in
 
 \item $K = 8_2$ or $K=11_{n6}$\vskip.05in
  \begin{enumerate} [{  $\bullet$}]
  \item  $ \delta_3 = \Delta_K(x) =x^6 -3x^5+3x^4-3x^3 +3x^2-3x +1$\vskip.05in
   \item roots ($\beta_1, \beta_2$), $t \approx  .132, t\approx .322$.
  \end{enumerate}\vskip.05in
 
 \end{itemize}

 The jump and signature data is as follows:
 \begin{itemize}
 \item $\upj_{\delta_1}(K) = 2$, $\downs_{\delta_1}(K) = 2$, $\ups_{\delta_1}(K) = 2$\vskip.05in
 
 \item $\upj_{\delta_2}(K) = 2$, $\downs_{\delta_2}(K) = 0$, $\ups_{\delta_2}(K) = 2$\vskip.05in
 
 \item $\upj_{\delta_3}(K) = 2$, $\downs_{\delta_3}(K) = 2$, $\ups_{\delta_3}(K) =4$\vskip.05in
 
 \end{itemize}
 
 From the $\delta_2$ invariants and the $\delta_3$ invariants we see that at least three crossing changes are required to unknot $K$.  However, from the $\delta_2$ invariants we see that an unknotting requires at least two negative to positive changes and at least one positive to negative change are required.  From $\delta_3$ we see that at least three negative to positive changes are required.  Combining these observations, we see that at least three negative to positive changes are required, and at least one positive to negative change is needed.  Thus, the unknotting number is at least four.\vskip.05in
 
 \noindent{\bf Note.} It is evident and can be proved in a number of ways that the unknotting number of this knot is much greater than four.  This example becomes more interesting when considered from the four-dimensional perspective, as discussed in Section~\ref{sec:4d}.  It follows from the results there that this knot does not bound an immersed disk in $B^4$ having fewer than four double points.  The best lower bound on this clasp number that can be obtained from previous signature based bounds is two.
\end{example}

\section{Comparison of bounds}\label{sec:compare}

Example~\ref{ex:split} illustrates a general procedure for finding a lower bound on the unknotting number.  For the moment we will call the outcome of that process $u_2(K)$.  We will not present a formal definition of this invariant. (There reader is invited to write down the details of the definition; it  requires defining the invariant that captures the minimum number of positive and negative crossing changes for each symmetric irreducible $\delta$ and then taking  the maximums  of each of these separately over all symmetric irreducible factors of $\Delta_K(x)$.  One must also consider the case   $\ups_\delta(K) < 0$, which we did not write down.)

 In this section we will compare $u_2(K)$ with the classical knot signature bound on $u(K)$;  we will temporarily denote the classical bound by $u_1(K)$.

Example~\ref{ex:split} presented  a knot  for which $u_2(K) = 2u_1(K)$.    By taking multiples of $K$ we can construct, for each $N>0$, a knot for which the classical signature bound on the unknotting number is $u(K) \ge 2N$, but for which our stronger invariants show that $u(K) \ge 4K$.  The next result states that this is the best possible. 

\begin{theorem} For all knots $K$, $u_1(K) \le u_2(K) \le 2u_1(K)$. \end{theorem}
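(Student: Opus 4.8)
The plan is to work throughout with the global extrema $a = \max_\omega \sigma_K(\omega)$ and $b = \min_\omega \sigma_K(\omega)$, so that $u_1(K) = (a-b)/2$, and to record the refined invariant as $u_2(K) = N + P$, where $N$ is the maximum, over all symmetric irreducible factors $\delta$ of $\Delta_K$, of the negative-to-positive bound supplied by Theorem~\ref{thm:signs}, and $P$ is the corresponding maximum of the positive-to-negative bounds. The single observation driving both inequalities is that the one-sided limits of $\sigma_K$ at any point $\alpha$ are exactly $\sigma_K(\alpha) \pm J_K(\alpha)$, and each such limit is a plateau value of the step function $\sigma_K$, hence lies in $[b,a]$. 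Taking $\alpha$ to be a root at which $|J_K|$ is maximal among the roots of a given $\delta$, the two limits $\sigma_K(\alpha) \pm J_K(\alpha)$ differ by $2\upj_\delta$ and both lie in $[b,a]$, which yields the key estimate
$$\upj_\delta \le (a-b)/2 = u_1(K),$$
valid for every $\delta$, alongside the trivial bounds $\ups_\delta \le a$ and $\downs_\delta \ge b$.

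For the upper bound I would estimate $N$ and $P$ separately. When $\ups_\delta \ge 0$, Theorem~\ref{thm:signs} gives a negative-to-positive bound $(\upj_\delta + \ups_\delta)/2 \le (u_1 + a)/2 = (3a-b)/4$ and a positive-to-negative bound $\max(0,(\upj_\delta - \downs_\delta)/2) \le (u_1 - b)/2 = (a-3b)/4$. When $\ups_\delta < 0$ one applies the theorem to the mirror $-K$, which only reshuffles these two bounds and leaves the same numerical ceilings in force. Since the ceilings hold for every $\delta$, they hold for the maxima, namely $N \le (3a-b)/4$ and $P \le (a-3b)/4$, whence
$$u_2(K) = N + P \le (3a-b)/4 + (a-3b)/4 = a - b = 2u_1(K).$$

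For the lower bound I would exhibit good polynomials. The value $a$ is a one-sided limit of $\sigma_K$ at some root $\alpha$, so $\sigma_K(\alpha) + |J_K(\alpha)| \ge a$; letting $\delta$ be the minimal polynomial of $\alpha$ gives $\ups_\delta + \upj_\delta \ge a$, and the negative-to-positive bound attached to $\delta$ is no smaller than $(\upj_\delta + \ups_\delta)/2 \ge a/2$, so $N \ge a/2$ (this remains valid in the mirror case, since there the bound is $\max(0,(\upj_\delta+\ups_\delta)/2)$, still at least $a/2$). Symmetrically, $b$ is realized as a one-sided limit at some root $\beta$, producing a $\delta'$ with $\upj_{\delta'} - \downs_{\delta'} \ge -b$ and hence $P \ge -b/2$. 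Adding, $u_2(K) = N + P \ge (a-b)/2 = u_1(K)$.

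The main obstacle is not any single estimate but the bookkeeping forced by the structure of $u_2$: because its two constituents $N$ and $P$ are maximized independently over the factors $\delta$ — and may be realized by \emph{different} polynomials, exactly as in Example~\ref{ex:split} — the upper bound cannot be read off from the per-polynomial inequality $n_\delta + p_\delta \le 2u_1$, and one must instead bound $N$ and $P$ against the asymmetric ceilings $(3a-b)/4$ and $(a-3b)/4$ whose sum happens to collapse to $a-b$. A secondary care point is the passage between the global extrema (which are one-sided limits, hence plateau values) and the averaged at-root quantities $\ups_\delta, \downs_\delta$: the identity (one-sided limit) $=\sigma_K(\alpha)\pm J_K(\alpha)$ is precisely what converts between them and explains why the jump term must be carried along, and one must separately verify that the mirror-image case $\ups_\delta < 0$ disturbs neither direction. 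That the two ceilings are attained simultaneously in Example~\ref{ex:split} shows $u_2 = 2u_1$ does occur and that the upper bound is sharp.
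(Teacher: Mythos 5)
Your proof is correct and follows essentially the same route as the paper's: both directions rest on the observation that the one-sided limits $\sigma_K(\alpha)\pm J_K(\alpha)$ are plateau values lying in $[b,a]$, which gives $\upj_\delta\le u_1(K)$ for the upper bound and realizes the extrema as $\sigma_K+|J_K|$ at a root of some $\delta$ for the lower bound. The only difference is cosmetic bookkeeping: you cap $N$ and $P$ separately by $(3a-b)/4$ and $(a-3b)/4$, whereas the paper groups the two jump contributions and the two signature contributions before summing.
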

\begin{proof}

Denote the minimum and maximum values of $\sigma_K(\omega)$ with $a$ and $A$.  Since the signature function takes on the value 0 near $\omega = 1$ ($t = 0$), we have $a \le 0 \le A$. By definition, $u_1(K) = (A - a)/2$.

For the convenience of the reader, we present the bounds on the signed number of crossing changes in Table~\ref{table}, covering the four possible cases.  For the moment, we let  $\mathcal{N}$ denote the minimum number of required changes from negative to positive and let $\mathcal{P}$ denote the minimum number of required crossing changes from positive to negative.  The table summarizes the result of Theorem~\ref{thm:signs}, including the cases in which $\ups<0$
\vskip.05in

\noindent  {\bf Part 1,  $\mathbf{u_2(K) \le 2 u_1(K)} $:}     Our bound on the unknotting number is the sum of entry from the ``$\mathcal{N}$'' column in the table, arising from some polynomial $\delta_1$ and an entry from the 
 ``$\mathcal{P}$'' column arising from a polynomial $\delta_2$, which might equal $\delta_1$. This sum will involve either one or two values of $\upj$, each divided by two.  Since each $\upj$ satisfies $0 \le \upj \le (A - a)/2$, the sum, after dividing by two, is less than or equal to  $(A - a)/2$.
 
 The sum of two entries also involves  terms of the form $(\ups - \downs)/2$, where  each term might arise from a different $\delta_i$.  (There are also cases in which either the $\ups$ or $\downs$ terms are replace with $0$.)  In any case, this sum is also bounded above by $(A-a)/2$.  
 
 Adding together these two sums yields a total that is less than or equal to $(A-a)$, which is $2u_1(K)$, as desired.
\begin{table}\label{table}

{ \renewcommand{\arraystretch}{1.5}
  \begin{tabular}{|r|r|r|}
 \hline 
  &$\mathcal{N} \ge $ &$\mathcal{P} \ge $\\  
   \hline
$ \ups \ge 0, \downs \le \upj$  &   $(\upj + \ups)/2$ & $ (\upj - \downs )/2$ \\    
  \hline    
 $ \ups \ge  0, \downs >  \upj$   &   $(\upj +\ups)/2$ &  $0$ \\   
  \hline
  $ \ups < 0, -\ups \le  \upj$  &  $(\upj + \ups)/2$ & $(\upj - \downs)/2$ \\   
  \hline
  $ \ups <  0, -\ups > \upj$  &   $0$ &  $(\upj - \downs)/2$      \\   
  \hline
 \end{tabular}
 }\vskip.1in
 \caption{Bounds on required signed crossing changes.}
\end{table}

 
\vskip.05in

 \noindent  {\bf Part 2,  $\mathbf{u_1(K) \le u_2(K)} $:}    We observe first that $A = \left|J_K(\omega)\right| + \sigma_K(\omega)$ for some value of $\omega$.  That value of $\omega$ is the root of an irreducible polynomial $\delta$. For that $\delta$, we can consider the possibilities that are listed in Table~\ref{table} and with care find that the bound on $\mathcal{N}$ must be of the form $(\upj + \ups)/2$.   Since $A \ge 0$, we must have $-\sigma_K(\omega) \le\left| J_K(\omega)\right|$.  Thus, the constraint that arises for  $\mathcal{N}$ must be at least  $A/2$.  
 
 In a similar manner, but working with  $-K$, we see that the constraint on the $\mathcal{P}$ must be at least $-a/2$.  Thus, the total must be at least $(A-a)/2$, as desired.

\end{proof}

\section{Gordian distance}  \label{sec:gordian}

The Gordian distance between knots $K$ and $J$, denoted $d_g(K,J)$, is defined to be the minimum number of crossing changes required to convert $K$ into $J$. Initial interest in $d_g$ arose in the classical knot theory setting, but as we will describe in Section~\ref{sec:4d}, it is  related to four-dimensional properties of knots and in particular is closely tied to a natural metric defined on the knot concordance group. References include~\cite{ baader,  baader2, blair-c-j-t-t, borodzik-friedl-powell,feller0,hirasawa,miyazawa,murakami}. 

\begin{theorem} $d_g(K,J) \ge u_2(K \cs -J)$.
\end{theorem}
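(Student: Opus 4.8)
The plan is to reduce the Gordian distance statement to the already-established unknotting-number bound $u_2$ via the standard connected-sum trick. The key observation is that $d_g(K,J)$ counts crossing changes that convert $K$ into $J$, while $u_2$ is designed to bound the number of crossing changes required to convert a knot into the unknot. To bridge these, I would use the fact that a crossing change in $K$, performed in the presence of a fixed summand $-J$, is simultaneously a crossing change in the connected sum $K \cs -J$. Concretely, suppose $d_g(K,J) = n$, realized by a sequence of $n$ crossing changes converting $K$ into $J$. Performing this same sequence inside $K \cs -J$ converts it into $J \cs -J$. Since $J \cs -J$ is slice, and in particular its signature function $\sigma_{J \cs -J}$ vanishes identically (as $\sigma_{-J} = -\sigma_J$ and signatures add under connected sum), the knot $J \cs -J$ has all the jump and signature invariants $\upj_\delta, \downs_\delta, \ups_\delta$ equal to zero for every $\delta$.

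The crucial point is that the invariant $u_2$ was constructed (in Section~\ref{sec:compare}) to bound not merely the number of crossing changes needed to reach the unknot, but the number needed to reach \emph{any} knot with trivial signature function; this is precisely the observation highlighted at the start of Section~\ref{sec:4d}. Thus the $n$ crossing changes that carry $K \cs -J$ to the signature-trivial knot $J \cs -J$ already witness $u_2(K \cs -J) \le n$. I would first verify that the entire machinery of Proposition~\ref{thm:changes}, Theorem~\ref{thm:details}, and Theorem~\ref{thm:signs} never used that the terminal knot was the unknot \emph{per se}, only that its terminal triple $(\upj_\delta, \downs_\delta, \ups_\delta)$ was $(0,0,0)$. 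Since $J \cs -J$ satisfies this for every irreducible symmetric factor $\delta$, the combinatorial argument reducing $(\upj_\delta, \downs_\delta, \ups_\delta)$ to $(0,0,0)$ applies verbatim. Hence every bound feeding into $u_2(K \cs -J)$ is valid, giving $u_2(K \cs -J) \le d_g(K,J)$.

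The steps in order would be: (1) fix a minimal realizing sequence of $d_g(K,J)$ crossing changes taking $K$ to $J$; (2) observe that these same local modifications are crossing changes in the diagram $K \cs -J$, producing $J \cs -J$; (3) record that $\sigma_{J\cs -J} \equiv 0$ and $J_{J \cs -J} \equiv 0$, so that for every $\delta$ the terminal invariants vanish; (4) invoke the fact that $u_2$ bounds from below the length of any crossing-change sequence reducing the invariant triples to zero, concluding $u_2(K \cs -J) \le d_g(K,J)$.

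The step I expect to be the main obstacle, or at least the one requiring the most care, is (2) together with the bookkeeping in (4): one must confirm that a crossing change performed on the $K$ summand genuinely corresponds to a single crossing change in a diagram for $K \cs -J$, and that the sign of each crossing change is preserved under this identification. This sign-tracking matters because $u_2$ is defined through the \emph{signed} counts of Theorem~\ref{thm:signs}, so the argument must ensure that a positive-to-negative change in $K$ remains a positive-to-negative change in $K \cs -J$. This is intuitively clear from the geometry of the connected sum, but making it rigorous requires choosing a diagram for $K \cs -J$ in which the $K$-crossings appear unchanged, and then appealing to the sign conventions established around Theorem~\ref{thm:seifert}. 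Once the signed correspondence is established, the remaining inequality is a direct application of the already-proven lower bounds, with no new estimates needed.
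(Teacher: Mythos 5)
Your proposal is correct and follows the same route as the paper: perform the $d_g(K,J)$ crossing changes on the $K$ summand of $K \cs -J$ to reach the slice knot $J \cs -J$, note that its signature function (hence all the jump and signature data) vanishes, and invoke the fact that the $u_2$ machinery only ever used that the terminal triple is $(0,0,0)$. The paper's proof is just a terser version of your steps (1)--(4), so no further comparison is needed.
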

\begin{proof} Since $u_2$ depends only on the signature function, it gives a lower bound on the number of crossing changes required to convert a given knot into a knot with trivial signature function.  If $K$ can be converted into $J$ with $u$ crossing changes, then $K\cs  -J$ can be converted into $J\cs -J$ with $u$ crossing changes.  The knot $J \cs -J$ is a slice knot, and thus has trivial signature.  (We will say more about such four-dimensional issues in Section~\ref{sec:4d}.)
\end{proof}

\begin{example}\label{ex:t3} As our only application, we consider the connected sum of torus knots $$K = T(3,10) \cs - T(2,15) \cs -T(5,6).$$  Its signature function is illustrated in Figure~\ref{fig:3ts}.  The Alexander polynomial factors as cyclotomic polynomials $\phi_6(x)^2\phi_{10}(x)^2\phi_{15}(x)^2\phi_{30}(x)^3$.  In the graphs, the points on the graph above these roots are marked, with the $\alpha$ points corresponding to roots of $\phi_{30}$;  similarly, $\beta, \gamma,$ and $\eta$, points correspond to roots of $\phi_{15}, \phi_{10},$ and $\phi_6$, respectively.

\begin{figure}[h]
\fig{.7}{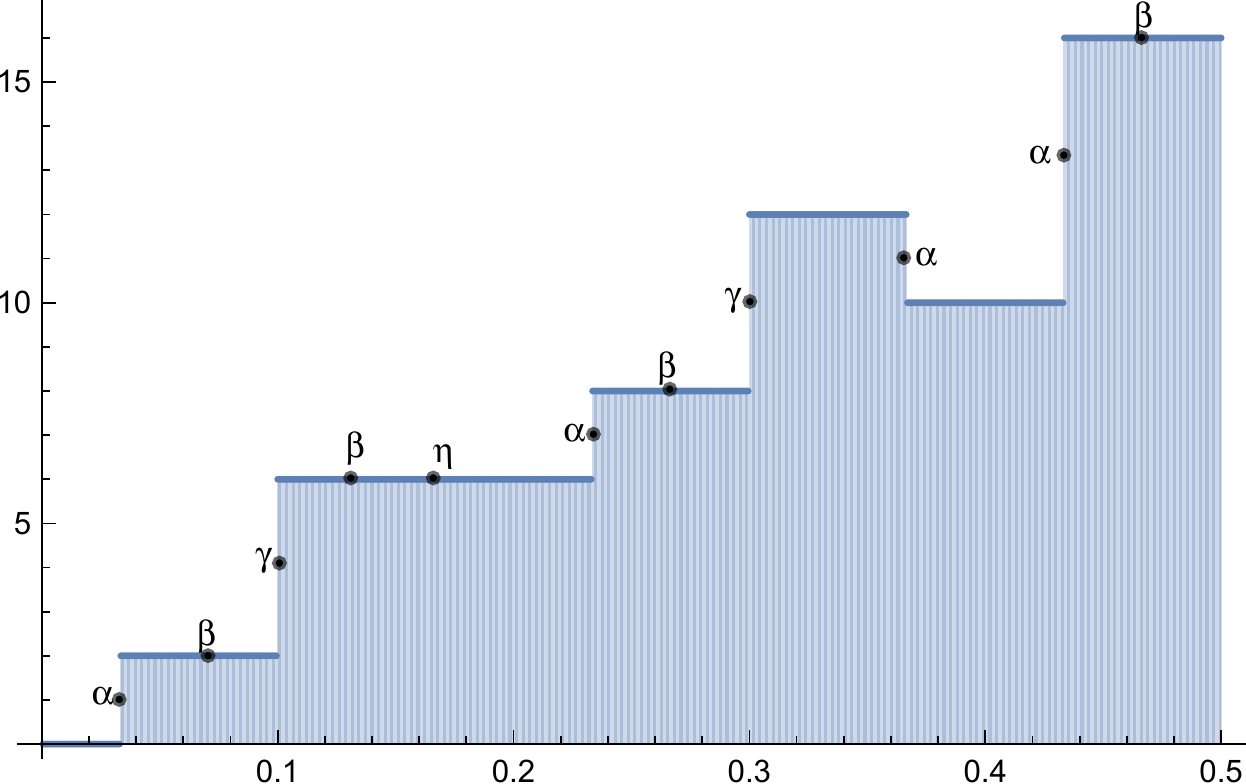}\label{fig:3ts}
 \caption{Signature function for $T(3,10) \cs -T(2,15) \cs -T(5,6)$.}\label{fig:twopolygraph}\end{figure}
 
 The classical signature bound on the unknotting number of $K$ is $16/2 = 8$.  Considering the polynomial $\phi_{30}$ we have the set of jumps is $\{1,1,-1,3\}$ and the set of signatures is $\{1,7,11,13\}$.  Thus  $(\upj, \downs, \ups) =( 3, 1 , 13)$.  Thus, we see that unknotting $K$ would require at least $(3+13)/2 = 8$ crossing changes from negative to positive, and at least $(3 -1)/2 =1$ crossing changes from positive to negative.  All together we have a total of at least 9 crossing changes required.
 
 In conclusion, we have the bound on the Gordian distance $d_g(T(3,10), T(2,15) \cs T(5,6)) \ge 9.$  
We will not compute Khovanov or Heegaard Floer invariants here, but note that the $\tau$, $\Upsilon$ and $s$--invariant bounds on the unknotting number and Gordian distances are all 8.

\end{example}

\section{Four-dimensional perspective}\label{sec:4d}
If  a knot $K$ can be unknotted with $k$ crossing changes, then it bounds an immersed disk in $B^4$ with $k$ transverse double points. The minimum number of  double points in an immersed disk bounded by $K$ in $B^4$, taken over all such immersed disks, has been called the {\it clasp number},  {\it the four-dimensional clasp number} or the {\it four-ball crossing number}.  For the moment, we denote this invariant $c(K)$. References include~\cite{kawamura, murakami-yasuhara, owens1}.  This invariant can be refined by considering the number of positive and negative double points in the immersed disk.  

In a similar way, if a sequence of crossing changes   converts a knot $K$ into a knot $J$, then there is an immersion of $S^1 \times I$ into $S^3 \times I$ (a {\it singular concordance}) with boundary $K\times \{0\} \cup J\times \{1\}$ such that the number of double points equals the number of crossing changes.  There is, in a way, a  converse.  From ~\cite[Proposition 2.1]{owens-strle} we have the following. 

\begin{theorem} If $K$ and $J$ bound a singular concordance with $p$ positive and $n$ negative double points, then there are knots $K'$ and $J'$, concordant to $K$ and $J$, respectively, such that $K'$ can be converted into $J'$ with $p$ positive and $n$ negative crossing changes.
\end{theorem}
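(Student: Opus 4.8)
The plan is to reduce the immersed annulus to a finite sequence of elementary pieces, each of which is either an honest concordance or a single crossing change, and then to slide all the crossing changes into one contiguous block by repeatedly commuting a crossing change past an adjacent concordance. First I would put the singular concordance $A\co S^1\times I \to S^3\times I$ in general position with respect to the height function $h\co S^3 \times I \to I$. By a small isotopy one arranges that the finitely many transverse double points of $A$ occur at distinct levels $t_1 < \cdots < t_m$ (with $m = p+n$), and, after a further isotopy removing the critical points of $h\circ A$, that every generic level set $A^{-1}(S^3\times\{t\})$ is a single embedded circle, i.e.\ a knot. Slicing at generic levels just below and just above each $t_j$ then exhibits $A$ as a composition
\[
K \xrightarrow{\mathcal{C}_0} Y_1 \xrightarrow{c_1} Y_1' \xrightarrow{\mathcal{C}_1} Y_2 \xrightarrow{c_2} \cdots \xrightarrow{c_m} Y_m' \xrightarrow{\mathcal{C}_m} J,
\]
in which each $\mathcal{C}_j$ is an embedded sub-annulus, hence a genuine concordance, and each $c_j$ is the passage through a single transverse double point. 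A transverse double point of the annulus is exactly a crossing change, and its sign equals the sign of the double point, so among $c_1,\dots,c_m$ exactly $p$ are positive and $n$ are negative.

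The key step is an exchange lemma: a concordance followed by a crossing change can be rewritten as a crossing change of the same sign followed by a concordance. To prove it, suppose $X \xrightarrow{\mathcal{C}} Y$ is a concordance and $Y \xrightarrow{c} Z$ is a crossing change of sign $\epsilon$. I would realize $c$ by an immersed annulus carrying a single double point $d$ of sign $\epsilon$, supported in a product region above $Y$, and stack it on $\mathcal{C}$ to obtain an immersed annulus from $X$ to $Z$ whose only singularity is $d$. Now drag $d$ downward by an ambient isotopy along the embedded annulus $\mathcal{C}$ (whose codimension-two normal bundle is trivial, giving the room to slide the local finger) until it lies in the boundary collar of $\mathcal{C}$ at $X$, where $\mathcal{C}$ is a product $X\times[0,\delta]$. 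In that product collar the double point realizes a single crossing change applied to $X$, producing a knot $W$ with $X \xrightarrow{c'} W$; since ambient isotopy preserves the sign of a transverse double point, $c'$ again has sign $\epsilon$, and everything above the collar is embedded and gives a concordance $W \xrightarrow{\mathcal{C}'} Z$. Running the same argument upside down yields the reverse exchange.

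Finally I would assemble the result. Applying the exchange lemma repeatedly, I push each interior concordance past the crossing changes adjacent to it and compose the pieces, collecting all the concordance into a single concordance at each end. Because each exchange preserves the sign of the crossing change it moves, this leaves a single contiguous block of $p+n$ crossing changes, of which $p$ are positive and $n$ are negative. Writing $K'$ for the knot at the bottom of this block and $J'$ for the knot at its top, the absorbed concordances show that $K'$ is concordant to $K$ and $J'$ is concordant to $J$, while $K'$ is converted into $J'$ by exactly $p$ positive and $n$ negative crossing changes, as required.

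I expect the exchange lemma to be the main obstacle. The delicate points are verifying that the double point can be dragged through the embedded concordance by an honest ambient isotopy, so that neither its sign nor the knot types at generic levels are disturbed, and that when it reaches the product collar it genuinely realizes one crossing change rather than a crossing change buried inside another nontrivial concordance (the product structure of the collar is what rules out such an infinite regress). A secondary technical point lies in the general-position setup: one must ensure that the critical points of $h\circ A$ on the annulus can be cancelled so that the intermediate slices are honest knots, without introducing any extra double points along the way.
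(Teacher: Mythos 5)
First, a point of comparison: the paper does not prove this statement at all --- it is quoted from \cite{owens-strle} (Proposition 2.1) --- so there is no internal argument to measure yours against; your proposal has to stand on its own.

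It does not, and the failure is at the very first step. You claim that after an isotopy one can remove the critical points of $h\circ A$ so that every generic level set of the annulus is a single embedded circle. This is impossible in general. On an annulus, a Morse function whose regular level sets are all connected has no critical points whatsoever: a birth or death immediately produces a second component, and a saddle either merges two circles or splits one, forcing a disconnected level on one side; the Euler characteristic relation $b-s+d=\chi=0$ then gives $b=d=s=0$. But an immersed annulus in $S^3\times I$ on which the projection to $I$ is a submersion is, away from the double-point levels, a level-preserving ambient isotopy of knots. So your normalization, if achievable, would prove the far stronger statement that $K$ itself is converted to $J$ itself by $p$ positive and $n$ negative crossing changes; taking $p=n=0$, it would say that concordant knots are isotopic, which is false. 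The ``concordant to $K$ and $J$'' clause in the conclusion is not slack to be tidied away --- it exists precisely because the births, saddles and deaths of $h\circ A$ cannot be cancelled, only pushed toward the two ends of the annulus, and carrying them past the double points is the actual content of the Owens--Strle argument. Without the false normalization your pieces $\mathcal{C}_j$ are cobordisms between \emph{links}, not concordances between knots, the exchange lemma does not even apply to them, and the argument never gets started. You flag this as ``a secondary technical point'' at the end; it is the whole theorem.

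A smaller remark: even where it does apply, the exchange lemma as sketched is not innocent. Dragging the double point down along $\mathcal{C}$ pushes a finger of the upper sheet through every intermediate level, and the fingertip is a new local minimum of $h$ restricted to the surface; so the move destroys the product structure you are relying on, and the knot $W$ produced in the collar is not visibly ``$X$ with one crossing changed'' until the new critical points are re-sorted. That is repairable, but it is the same Morse-theoretic bookkeeping that the main gap conceals. To fix the proof you need a genuinely different organizing idea: keep the critical points, and show instead that the double points can be slid along the surface into a single product region in the middle of the cylinder, with embedded (possibly high-genus, then surgered down) pieces above and below furnishing the concordances to $K'$ and $J'$.
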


For knots $K$ and $J$, we can define a distance $d_c(K,J)$ as the minimum number of double points in a singular concordance between the knots.  This induces a metric on the concordance group.   For $U$ the unknot, we have that the four-dimensional clasp number is equal to $d_c(K,U)$.

 Since concordant knots have the same signature functions, we have  the following result.

\begin{theorem}\label{thm:changes-singular}  For knots $K$ and $J$, $d_c(K,J) \ge u_2(K \cs -J)$.
\end{theorem}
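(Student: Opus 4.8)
The plan is to mimic the proof of the Gordian distance bound $d_g(K,J)\ge u_2(K\cs -J)$ from Section~\ref{sec:gordian}, inserting the Owens--Strle result stated just above in order to pass from a singular concordance to an honest sequence of crossing changes on a pair of concordant knots. The one extra ingredient beyond that earlier argument is that $u_2$, being a function of the signature data alone, is left unchanged when $K$ and $J$ are replaced by knots concordant to them.

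Concretely, I would first set $d=d_c(K,J)$ and fix a singular concordance from $K$ to $J$ realizing this minimum, say with $p$ positive and $n$ negative double points, so that $p+n=d$. Applying \cite[Proposition 2.1]{owens-strle} produces knots $K'$ and $J'$, concordant to $K$ and $J$ respectively, together with a sequence of $p$ positive and $n$ negative crossing changes carrying $K'$ to $J'$. Performing these same $d$ crossing changes on the first summand of $K'\cs -J'$ then converts it into $J'\cs -J'$.

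Next I would observe that $J'\cs -J'$ is slice and hence has identically trivial signature function, so the sequence just described converts $K'\cs -J'$ into a knot with trivial signature function using only $d$ crossing changes. Since $u_2$ depends only on the signature function, it bounds from below the number of crossing changes needed to reach such a knot --- this is precisely the principle already invoked in the Gordian distance proof --- and therefore $u_2(K'\cs -J')\le d$. It then remains to identify $u_2(K'\cs -J')$ with $u_2(K\cs -J)$: because the signature function and the jump function $J_K$ are concordance invariants and are additive under connected sum, $K'\cs -J'$ and $K\cs -J$ carry identical signature and jump functions, hence the same values of $\upj_\delta,\downs_\delta,\ups_\delta$ for every symmetric irreducible $\delta$, and thus the same invariant $u_2$. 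Combining the two displays yields $u_2(K\cs -J)=u_2(K'\cs -J')\le d=d_c(K,J)$.

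The argument is essentially a reassembly of results already available, so there is no deep difficulty; the only step requiring genuine care is the concordance-invariance identification in the last paragraph, where one must confirm that \emph{every} piece of data feeding into $u_2$ survives the passage from $K,J$ to $K',J'$. This reduces to the standard facts that $\sigma_K$ and $J_K$ are concordance invariants and behave additively under $\cs$, so I expect this to be the main (and mild) obstacle rather than the crossing-change manipulation, which follows formally from the Owens--Strle proposition.
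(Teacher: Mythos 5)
Your argument is correct and is exactly the proof the paper intends: the paper dispatches this theorem with the single remark that concordant knots have the same signature functions, relying on the Owens--Strle proposition quoted just above and the Gordian-distance argument, and your write-up simply fills in those same steps (including the mild point that the jump function, being determined by $\sigma_K$, is also a concordance invariant). No gaps.
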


\begin{example} The same computation  as done in Example~\ref{example1} shows that any singular concordance from $5_1$ to $   -10_{132}$ must have at least two positive double points and one negative double point.  In particular, the four-dimensional clasp number of $5_1 \cs 10_{132}$ is 3.
\end{example}

\begin{example} From Example~\ref{ex:t3} we have that $d_c(T(3,10), T(2,15) \cs T(5,6)) \ge 9$. 
\end{example}

\subsection{The four-genus}
The clasp number and four-genus of a knot are related by the following bound: $g_4(K) \le c(K)$.  This can be enhanced with the following observation: If $K$ bounds an immersed disk with $p$ positive and $n$ negative double points, then $g_4(K) \le \max(p,n)$.  Thus, lower bounds on $g_4$ provide lower bounds on the clasp number (and unknotting number).  

In the case that a knot is slice, $g_4(K) = 0$, the clasp number is also 0.  Multiples of the  square knot, $T(2,3) \cs -T(2,3)$, provide    examples of slice knots for which the unknotting number can be arbitrarily large.  In fact, using the homology of the $2$--fold branched cover~\cite{kinoshita, wendt}, one shows that $u\left(N( T(2,3) \cs -T(2,3))\right) = 2N$.  It is unknown whether there exists a knot $K$ with $g_4(K)  =1 $, but $c(K) > 2$;   in~\cite{livingston0, owens-strle} it is shown that there are knots with four-genus one that cannot be converted into slice knots using one crossing change.  Owens~\cite{owens1}  has identified  two-bridge knots $K_n$ with $g_4(K_n) = n$, $\sigma(K) = 2n$, and which cannot be converted into a slice knot with  $n$ crossing changes from negative to positive and any number of positive to negative crossing changes.  In particular, the knot $K_1 = - 7_4$ has $g_4(K_1) = 1$, $\sigma(K_1) = 2$, but any sequence of crossing changes that converts $K_1$ into a slice knot must include at least two crossing changes from negative to positive.  

Our main results, since they are providing {\it lower} bounds on $u(K)$ and $c(K)$, might not offer bounds on $g_4(K)$.  However, it is worth noting that the observations made in this paper offer a much simpler proof of this   result from~\cite{livingston1}. 

\begin{corollary}  Let $K \subset  S^3$ be a knot and let  $ \{\alpha_1, \ldots, \alpha_k\} \subset \ss$  be a nonempty subset of the complex roots of an irreducible rational polynomial $ \delta$.    Then $$g_4(K) \ge   \frac{1}{2}\Big( \max \{ \left|J_K(\alpha_i) \right|  \} +   \max \{ \left|\sigma_K(\alpha_i) \right|  \} \Big). $$
\end{corollary}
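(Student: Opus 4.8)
The plan is to obtain the bound as a one-line combination of the two results already established: the four-dimensional input of Theorem~\ref{thm:taylor2} and the dimension estimate of Theorem~\ref{thm:parity}. The key observation is that the right-hand side of the asserted inequality is exactly one half of the lower bound on the dimension $N$ of a representative of $W_K$ provided by Theorem~\ref{thm:parity}. So it suffices to exhibit a representative of $W_K$ whose dimension $N$ is at most $2g_4(K)$, and then quote that dimension bound and divide by two.

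First I would set $h = g_4(K)$ and invoke Theorem~\ref{thm:taylor2}: since $K$ bounds a surface of genus $h$ in $B^4$, the Witt class $W_K$ is represented by a matrix of size $N = 2h$. Before applying Theorem~\ref{thm:parity}, I must reconcile its hypothesis that $\delta$ be \emph{symmetric} with the weaker assumption here that $\delta$ be merely irreducible over $\qq$. This is the one point requiring care, and I expect it to be the only genuine subtlety in the argument. It is resolved as follows: because $k>0$ there is a root $\alpha_1 \in \ss$ of $\delta$, so $\alpha_1 \overline{\alpha_1} = 1$, that is, $\overline{\alpha_1} = \alpha_1^{-1}$. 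Since complex conjugation is a $\qq$-automorphism, $\alpha_1^{-1}$ is again a root of the irreducible polynomial $\delta$; hence the reciprocal polynomial $x^{\deg \delta}\delta(x^{-1})$, which is itself irreducible and shares the root $\alpha_1$ with $\delta$, must be an associate of $\delta$. Therefore $\delta$ is symmetric, and Theorem~\ref{thm:parity} applies to the set $\{\alpha_1, \ldots, \alpha_k\}$ of its unit-circle roots.

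Applying Theorem~\ref{thm:parity} to the $2h$-dimensional representative then gives
$$2h = N \ge \max \{ \left|J_K(\alpha_i)\right| \} + \max \{ \left|\sigma_K(\alpha_i)\right| \},$$
and dividing by two yields
$$g_4(K) = h \ge \frac{1}{2}\Big( \max \{ \left|J_K(\alpha_i)\right| \} + \max \{ \left|\sigma_K(\alpha_i)\right| \} \Big),$$
as desired. The only content beyond citing the two theorems is the symmetry reduction above; once that is in place the proof is immediate, which is precisely why it is much shorter than the original argument in~\cite{livingston1}.
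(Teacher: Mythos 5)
Your proof is correct and follows essentially the same route as the paper: both combine the $2g_4(K)$-dimensional representative from Theorem~\ref{thm:taylor2} with the diagonalization bound, the only difference being that you cite Theorem~\ref{thm:parity} where the paper redoes that diagonalization argument inline. Your observation that an irreducible rational polynomial with a unit-circle root is automatically symmetric is a worthwhile addition, since it reconciles the hypotheses of the corollary with those of Theorem~\ref{thm:parity}, a point the paper leaves implicit.
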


 \begin{proof} As stated in Theorem~\ref{thm:taylor2}, it follows from ~\cite{taylor}  that if $g_4(K) = g$, then $W_K \in W(\qq(x))$ has a  $2g \times 2g$ representative.  Consider the  diagonal form of such a representative, $W_K = [ f_1\delta, \ldots, f_m\delta, g_1, \ldots , g_n]$. It is clear that for all $\alpha_i$, $ \left|J_K(\alpha_i) \right| \le m $ and $    \max  \left|\sigma_K(\alpha_i) \right| \le n$.   It follows that $\max \{ \left|J_K(\alpha_i) \right|  \} +   \max \{ \left|\sigma_K(\alpha_i) \right|  \} \le m + n =2g$.
 \end{proof}


\section{The non-balanced signature function}\label{sec:non-balanced}\label{sec:non-balanced}

For a knot $K$ with Seifert matrix $V_K$, the signature of the matrix $(1-\omega)V_K + (1 - \overline{\omega})V_K^{\sf T}$ yields a well-defined function $s_K(\omega)$ on the unit circle.  The proof that $s_K(\omega)$ is a knot invariant is a consequence of  the fact that any two Seifert surfaces for a knot are stably equivalent~\cite{murasugi, trotter}.   The signature function we have been considering, $\sigma_K$, is defined by taking the two-sided average of $s_K$.  We have focused our attention on the balanced function because it defines a homomorphism on the knot concordance group.  There are example of slice knots for which $s_K$ is nontrivial~\cite{cha-livingston, levine2}, and thus it is not a knot concordance invariant. Here we briefly explore how  $s_K$ can be used to extract unknotting information that is not accessible via $\sigma_K$.  Of course, these results do not generalize to give concordance invariants.

Let $\delta(x)$ be an irreducible Alexander polynomial having   roots  $\{  \alpha_1, \ldots , \alpha_n\}$ on the unit circle.   Let $\Lambda_\delta = \qq[x,x^{-1}]_{(\delta)}$ denote the  ring formed by inverting all irreducible elements in $\qq[x,x^{-1}]$ other than $\delta(x)$.  The proof that   hermitian forms over $\qq(x)$ can be diagonalized is easily modified to the  case of hermitian forms over $\Lambda_\delta$.  One needs to check that the step-by-step diagonalization process can be adjusted so that division by $\delta(x)$ is not required.

Given this, the proof of  Lemma~\ref{cor:block} generalizes to the setting of $\Lambda_\delta$, and so the effect of crossing changes is determined by the signature functions for a pair of matrices of the following form:$$    
W_{\pm 1}(x) = \left(
\begin{array}{ c   c}
a(x) & \overline{b(x)} \\
{b(x)} &  d(x) +  \epsilon_{\pm} (1-x)(1-x^{-1})\\
\end{array}
\right).
$$
Here, all polynomials are in $\Lambda_\delta$, $\epsilon_- = 1$, and $\epsilon_+ = 0$.  

We now factor out powers of $\delta$ to rewrite this as 
$$    
\left(
\begin{array}{ c   c}
a'(x)\delta(x)^i & \overline{b'(x)}\delta(x)^j \\
{b'(x)\delta(x)^j} &  d(x) +  \epsilon_{\pm} (1-x)(1-x^{-1})\\
\end{array}
\right).
$$

Considering the difference of signatures, $\text{sign}(W_+(\alpha_i)) - \text{sign}(W_-(\alpha_i))$,  yields the following cases, all of which  are easily analyzed by considering diagonalizations. \vskip.05in
\begin{itemize}
\item  If $ i = 0$, the difference of signatures is determined by difference of values of    $$d'(\alpha_i) +  \epsilon_{\pm} (1-\alpha_i)(1-\alpha_i^{-1})$$ for some $d' \in \Lambda_\delta$.\vskip.05in

\item If $i \ne 0$ and $j =  0$, then both signatures are 0.\vskip.05in

\item If $i \ne 0$ and $j \ne 0$, then the difference of signatures is determined by difference of values of    $$d'(\alpha_i) +  \epsilon_{\pm} (1-\alpha_i)(1-\alpha_i^{-1})$$ for some $d' \in \Lambda_\delta$.\vskip.05in

\end{itemize}

The approach of our previous work now applies, and a simple consequence is the following.

\begin{lemma}  If $\delta(x) \in \zz[t,t^{-1}]$ has roots $\{\alpha_1, \ldots , \alpha_n\}$ on the unit circle and a crossing change from positive to negative is made to a knot $K$, then either all the values of $s_K(\alpha_i)$ increase by 1, or some increase by 2 and others are unchanged.
\end{lemma}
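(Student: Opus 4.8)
The plan is to mirror the structure of the proof of Proposition~\ref{thm:changes}, but to replace the jump-and-average bookkeeping by a direct analysis of the Hermitian matrix $W_F(\alpha_i)$ at each root, since $s_K(\alpha_i)$ is by definition the signature of this (possibly singular) matrix. By Theorem~\ref{thm:seifert} I may choose Seifert surfaces $F_+$ and $F_-$ for $K_+$ and $K_-$ whose Seifert forms agree except that $V_{F_-}^{2g,2g} - V_{F_+}^{2g,2g} = 1$. Writing $E$ for the elementary matrix with a single $1$ in the $(2g,2g)$ slot, a direct computation from $W_F = (1-x)V_F + (1-x^{-1})V_F^{\sf T}$ gives $W_{F_-} - W_{F_+} = \big[(1-x)+(1-x^{-1})\big]E$. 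For $\alpha \in \ss$ with $\alpha \ne 1$ this scalar equals $2 - 2\,\mathrm{Re}(\alpha) = |1-\alpha|^2 > 0$, so $W_{F_-}(\alpha)$ is a positive semidefinite rank-one perturbation of $W_{F_+}(\alpha)$.

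First I would establish the pointwise bound. Because $W_{F_-}(\alpha_i)$ differs from $W_{F_+}(\alpha_i)$ by a positive rank-one term, eigenvalue interlacing for rank-one perturbations applies: every eigenvalue weakly increases, at most one can cross upward through $0$, so the number of positive eigenvalues increases by at most $1$ and the number of negative eigenvalues decreases by at most $1$. Hence $s_{K_-}(\alpha_i) - s_{K_+}(\alpha_i) \in \{0,1,2\}$ for each $i$. Here one must track zero eigenvalues honestly, since at a root of $\Delta_K$ the matrix $W_F(\alpha_i)$ is singular; the interlacing count above already accounts for this.

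Next I would supply the global parity constraint, which is what promotes this pointwise range to the stated dichotomy. Diagonalizing each $W_{K_\pm}$ over $\Lambda_\delta$ as in the paragraph preceding the lemma yields a diagonal form $[h_1,\dots,h_{2g}]$ in which each $h_k$ is either a unit of $\Lambda_\delta$ (relatively prime to $\delta$) or divisible by $\delta$. At \emph{every} root $\alpha_i$ of $\delta$ the $\delta$-divisible entries vanish while the unit entries evaluate to nonzero real numbers; since the congruence realizing the diagonalization has determinant a unit of $\Lambda_\delta$, hence nonzero at $\alpha_i$, Sylvester's law gives $s_{K_\pm}(\alpha_i) = \sum \mathrm{sign}\,h_k(\alpha_i)$ summed over the unit entries. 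Letting $n_\pm$ be the number of unit entries, this shows $s_{K_\pm}(\alpha_i) \equiv n_\pm \pmod 2$ \emph{independently of} $i$, so the difference $s_{K_-}(\alpha_i) - s_{K_+}(\alpha_i)$ has a single parity $n_- - n_+ \bmod 2$ shared by all $i$.

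Combining the two ingredients finishes the argument: a quantity that is constant modulo $2$ in $i$ and lies in $\{0,1,2\}$ for each $i$ is either identically $1$ (when $n_- - n_+$ is odd), giving the first alternative, or takes values in $\{0,2\}$ (when $n_- - n_+$ is even), giving the second. The main obstacle is the parity step: the interlacing bound alone is purely local and \emph{a priori} permits a mixture of $0$'s, $1$'s, and $2$'s across the various $\alpha_i$, and it is only the simultaneous vanishing of the $\delta$-divisible diagonal entries at all roots of $\delta$ that rigidifies the behavior into the stated dichotomy. Some care is also required to confirm that the $\Lambda_\delta$-diagonalization computes the genuine signature at the singular points $\alpha_i$, which is precisely the role played by the nonvanishing of the transition determinant there.
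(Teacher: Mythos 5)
Your proof is correct, but it gets to the dichotomy by a genuinely different mechanism than the paper does. The paper's route is to carry the simultaneous block decomposition of Lemma~\ref{cor:block} over to $\Lambda_\delta$, so that $W_{K_+}$ and $W_{K_-}$ share a common diagonal part and differ only inside one $2\times 2$ block; the case analysis on the $\delta$-divisibility of the entries $a$ and $b$ of that block then reduces everything to comparing $\mathrm{sign}\bigl(d'(\alpha_i)\bigr)$ with $\mathrm{sign}\bigl(d'(\alpha_i)+(1-\alpha_i)(1-\alpha_i^{-1})\bigr)$, and the uniformity over the roots comes from the fact that an element of $\Lambda_\delta$ vanishes either at all roots of $\delta$ or at none. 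You instead decouple the two halves of the statement: the pointwise bound $s_{K_-}(\alpha_i)-s_{K_+}(\alpha_i)\in\{0,1,2\}$ comes from pure linear algebra (the explicit rank-one positive Hermitian perturbation $\bigl[(1-x)+(1-x^{-1})\bigr]E$ together with eigenvalue interlacing, with no need for any normal form relating the two matrices), and the rigidification into the stated dichotomy comes from the mod~$2$ constraint, obtained by diagonalizing each form separately over $\Lambda_\delta$ and observing, as in the last part of Theorem~\ref{thm:parity}, that the signature at every root of $\delta$ is congruent to the number of unit diagonal entries. Your version buys a more elementary and self-contained local estimate and makes explicit that parity alone is what forces the all-or-nothing behavior; you are also right to flag that Sylvester's law applies at the singular points $\alpha_i$ only because the change-of-basis determinant is a unit of $\Lambda_\delta$. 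The paper's version buys finer structural information -- it identifies the single scalar $d'(x)+\epsilon_\pm(1-x)(1-x^{-1})$ controlling the change and exhibits the ``all increase by $1$'' case as exactly the case where that scalar or $d'$ itself is $\delta$-divisible -- which is the same machinery reused from Proposition~\ref{thm:changes} and is what feeds the signed refinements elsewhere in the paper. Both arguments are complete; yours is arguably the cleaner proof of this particular lemma.
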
  
 
 Rather than   exploit the dependance of this result on the choice of $\delta$, here  we will  present an  easily stated result, expressed in terms of the floor and ceiling function.  Recall that  $\max\{s_K(\omega)\} \ge 0$ and $\min \{s_K(\omega)\} \le 0$.   
 
 \begin{theorem}\label{thm:nonb} For a knot $K$, let $M = \max\{\sigma_K(\omega)\}$ and $m = \min\{\sigma_K(\omega)\}$.  The unknotting number satisfies 
 $$u(K) \ge \lceil  M/2 \rceil   - \lfloor m/2 \rfloor.$$
 
 \end{theorem}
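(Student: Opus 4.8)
The plan is to reduce the global statement to bookkeeping at two well-chosen points, in the same spirit as the proofs of Theorems~\ref{thm:details} and~\ref{thm:signs}, but now carried out for the non-balanced function $s_K$, whose behaviour under crossing changes is controlled by the preceding Lemma. First I would isolate the purely local effect of a single crossing change. By Theorem~\ref{thm:seifert}, $K_+$ and $K_-$ bound Seifert surfaces whose Seifert matrices agree except that the $(2g,2g)$ entry increases by $1$. Since $(1-\omega)+(1-\overline{\omega}) = |1-\omega|^2$, the Hermitian matrix $(1-\omega)V_{K_-}+(1-\overline{\omega})V_{K_-}^{\sf T}$ is obtained from the one for $K_+$ by adding $|1-\omega|^2 E_{2g,2g}$, a positive-semidefinite rank-one matrix, for every $\omega \in \ss$ with $\omega \ne 1$. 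Eigenvalue interlacing for rank-one positive-semidefinite perturbations then shows the signature can only increase and by at most $2$, so
$$ s_{K_-}(\omega) - s_{K_+}(\omega) \in \{0,1,2\} \quad\text{for all } \omega \ne 1, $$
which is exactly the local alternative repackaged in the Lemma; reversing the roles, a negative-to-positive change moves $s_K(\omega)$ by an element of $\{0,-1,-2\}$. Since $s_K(1)=0$ for every knot, the restriction $\omega\ne 1$ costs nothing whenever the tracked value is nonzero.

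Next I would choose the two points to track. Because $\sigma_K$ is the two-sided average of $s_K$, its maximum $M$ is attained at a point of continuity $\omega_M$, where $\sigma_K(\omega_M)=s_K(\omega_M)=M$ (at a jump point $\sigma_K$ is an average of the one-sided limits, hence no larger than the neighbouring plateau values); symmetrically $m$ is attained at a continuity point $\omega_m$ with $s_K(\omega_m)=m$. Now suppose an unknotting sequence uses $P$ positive-to-negative and $N$ negative-to-positive crossing changes. Tracking $s_K(\omega_M)$: it starts at $M$ and must end at $0$, so its net change is $-M$; each positive-to-negative step contributes a nonnegative amount and each negative-to-positive step at least $-2$, so the net change is at least $-2N$, forcing $2N\ge M$, i.e.\ $N\ge\lceil M/2\rceil$. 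Tracking $s_K(\omega_m)$: its net change is $-m=|m|\ge 0$, each positive-to-negative step contributes at most $2$ and each negative-to-positive step a nonpositive amount, so $2P\ge -m$, i.e.\ $P\ge\lceil -m/2\rceil = -\lfloor m/2\rfloor$.

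Adding the two estimates gives $u(K)\ge N+P\ge \lceil M/2\rceil - \lfloor m/2\rfloor$, which is the claim. The one genuinely delicate point is the first step: one must be certain the bound $s_{K_-}(\omega)-s_{K_+}(\omega)\in\{0,1,2\}$ survives even when $\omega$ is a root of the Alexander polynomial of some intermediate knot, so that the tracked matrix is singular and the signature counts only its nonzero eigenvalues. This is precisely where the interlacing argument for rank-one positive-semidefinite perturbations—equivalently, the case analysis over $\Lambda_\delta$ that precedes the Lemma—is needed, as one must verify that a zero eigenvalue contributes the expected amount to the signature jump. Everything else is the elementary counting above, together with the identity $\lceil -m/2\rceil=-\lfloor m/2\rfloor$.
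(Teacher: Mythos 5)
Your local analysis is correct and is in fact cleaner than what the paper does: observing that $W_{K_-}(\omega) = W_{K_+}(\omega) + |1-\omega|^2 E_{2g,2g}$ is a rank-one positive-semidefinite perturbation, so that eigenvalue interlacing gives $s_{K_-}(\omega) - s_{K_+}(\omega) \in \{0,1,2\}$ for every $\omega \ne 1$ (including points where the forms are singular), replaces the paper's diagonalization over $\Lambda_\delta$ and its case analysis, and this weaker statement is all the theorem needs. The paper offers no written proof of the theorem --- it is presented as a ``simple consequence'' of the preceding lemma --- and your bookkeeping with $N$ and $P$, together with $\lceil -m/2\rceil = -\lfloor m/2\rfloor$, is exactly the intended argument.

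The defect is in your choice of tracking points, and it is not cosmetic. The printed statement says $M = \max\{\sigma_K(\omega)\}$, but this is a slip for $s_K$: see the sentence immediately preceding the theorem (``Recall that $\max\{s_K(\omega)\} \ge 0$\dots'') and the example that follows, where $\sigma_K \equiv 0$ and yet the theorem is invoked to conclude $u(K) \ge 4$ from $s_K(\omega_1) = 3$, $s_K(\omega_2) = -3$. By locating $\omega_M$ at a continuity point where $s_K = \sigma_K$, you prove only the classical bound $u(K) \ge (\max\sigma_K - \min\sigma_K)/2$: the extremes of $\sigma_K$ are always attained on plateaus, where the value is even, so your ceilings and floors act trivially, and your argument yields $u(K) \ge 0$ for the paper's example. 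The entire point of the section is that $s_K$ can spike at isolated roots of the Alexander polynomial, where it disagrees with $\sigma_K$. The repair is one line: take $\omega_M$ and $\omega_m$ to be points where $s_K$ itself attains its maximum $M$ and minimum $m$ (these may be roots of $\Delta_K$, not continuity points); since your local bound holds at every $\omega \ne 1$ and $s_K$ vanishes near $1$, the same tracking gives $2N \ge M$ and $2P \ge -m$, hence $u(K) \ge \lceil M/2\rceil - \lfloor m/2\rfloor$ with $M = \max s_K$ and $m = \min s_K$, which is the statement the example requires.
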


\begin{example} 
The knot $8_{20}$ is slice, and hence its signature function $\sigma_K$ is identically 0.  However, its Alexander polynomial is $(t^2 - t+1)^2$, having a root at the sixth root of unity, $\xi_6$.  A direct computation shows that $s_K(\xi_6) = 1$, and this is the only nonzero value of the upper unit circle.   It is shown in~\cite{cha-livingston} that similar, but less explicit example abound.

Using such knots as $8_{20}$, one can   construct a knot $K$ for which there are two numbers on the upper half circle, $\omega_1$ and $\omega_2$, with the property that $s_K(\omega_1) = 3$, $s_K(\omega_2) = -3$, and $s_k(\omega) = 0$ for all other $\omega$ on the upper half circle.  According to Theorem~\ref{thm:nonb}, this knot has unknotting number at least 4.  Notice that this is one more than $\big(\max(s_K(\omega)) - \min(s_K(\omega))\big)/2$, as might be expected from classic bound based on $\sigma_K(\omega)$. 
\end{example}

 \subsection{Doubly slice knots}   A knot $K$ is called doubly slice if it is the cross-section of an unknotted two-sphere embedded in $S^4$.   The first proof of the existence of such knots appeared in~\cite{terasaka-hosakawa}.  Invariants of such knots have since been studied in much finer detail; see for instance,~\cite{kearton, t-kim, livingston-meier, meier, stoltzfus, sumners}.  The non-balanced signature function of a doubly slice knot is identically 0, and this provides a means of proving that some slice knots are not doubly slice.  
 
 The slicing number of a knot and the algebraic slicing number of a knot are the number of crossing changes required to convert a knot into a slice, respectively algebraically slice, knot.  One could similarly define a {\it double slicing number} of a knot.  Hence, we have:  
 \begin{theorem}For a knot $K$, let $M = \max\{\sigma_K(\omega)\}$ and $m = \min\{\sigma_K(\omega)\}$.  The number of crossing changes required to convert $K$ into a doubly slice knot is greater than or equal to
 $  \lceil  M/2 \rceil   - \lfloor m/2 \rfloor.$

 \end{theorem}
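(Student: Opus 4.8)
The plan is to follow the proof of Theorem~\ref{thm:nonb} essentially verbatim, replacing the unknot by an arbitrary doubly slice knot and supplying the one extra input recorded just above, namely that the non-balanced signature function of a doubly slice knot vanishes identically. Thus, if $K'$ is a doubly slice knot obtained from $K$ by $u$ crossing changes, then $s_{K'}(\omega)\equiv 0$, so that $\max\{s_{K'}(\omega)\}=\min\{s_{K'}(\omega)\}=0$ and the quantity $\lceil M'/2\rceil-\lfloor m'/2\rfloor$ associated to $K'$ is $0$. Since $\max\{s_K(\omega)\}\ge 0\ge\min\{s_K(\omega)\}$ for every knot, the quantity $Q(K)=\lceil M/2\rceil-\lfloor m/2\rfloor$ is nonnegative, and it suffices to show that a single crossing change changes $Q$ by at most $1$; the bound $u\ge Q(K)$ then follows immediately.

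To control the effect of one crossing change, I would first record that under a positive to negative crossing change each value $s_K(\omega)$ increases by $0$, $1$, or $2$. At the finitely many $\omega$ that are roots of the Alexander polynomial this is exactly the content of the Lemma preceding Theorem~\ref{thm:nonb}; at every other $\omega$ the matrix $(1-\omega)V_K+(1-\overline{\omega})V_K^{\sf T}$ is nonsingular, and the crossing change adds the positive real number $(1-\omega)(1-\overline{\omega})$ to a single diagonal entry of a diagonalization valid at $\omega$. This is a rank one positive semidefinite perturbation of a nonsingular form, which raises the signature by $0$ or $2$. In all cases the value is nondecreasing and increases by at most $2$, and this holds simultaneously at every $\omega$ on the circle.

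Granting this pointwise statement, both extrema move monotonically and by a bounded amount: for a positive to negative change one has $M\le M'\le M+2$ and $m\le m'\le m+2$, where $M',m'$ denote the extrema after the change. Consequently $\lceil M'/2\rceil-\lceil M/2\rceil\in\{0,1\}$ and $\lfloor m'/2\rfloor-\lfloor m/2\rfloor\in\{0,1\}$, so that $Q'-Q\in\{-1,0,1\}$. The negative to positive case is identical after reversing all inequalities. Hence each crossing change alters $Q$ by at most $1$, and converting $K$ into a doubly slice knot, for which $Q=0$, requires at least $Q(K)=\lceil M/2\rceil-\lfloor m/2\rfloor$ crossing changes.

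The main obstacle I anticipate is bookkeeping around the extrema rather than any deep point: the Lemma is stated root by root for a fixed $\delta$, whereas $M$ and $m$ are global extrema of $s_K$ that may be realized at different $\omega$ before and after the crossing change, and possibly at points that are not roots of any single chosen $\delta$. The resolution, carried out above, is to upgrade the per-root statement to the assertion that $s_K(\omega)$ is nondecreasing and increases by at most $2$ at \emph{every} $\omega$; this uniform pointwise bound is precisely what yields $M\le M'\le M+2$ and $m\le m'\le m+2$ without any need to track where the extrema are attained.
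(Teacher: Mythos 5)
Your proof is correct and follows the paper's intended argument: the paper deduces this theorem directly from the observation that the non-balanced signature $s_K$ vanishes identically for doubly slice knots, combined with the crossing-change analysis underlying Theorem~\ref{thm:nonb}, which is exactly your route. Your explicit upgrade of the per-root Lemma to a uniform pointwise bound valid at every $\omega$ (via the rank-one positive semidefinite perturbation of $W_K(\omega)$) is precisely the detail the paper leaves implicit, and it correctly addresses the fact that the extrema $M$ and $m$ need not be attained at roots of a single $\delta$.
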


\end{document}